\theoremstyle{plain}
\newtheorem{thm}{\protect\theoremname}[section]
  \theoremstyle{definition}
  \newtheorem{defn}[thm]{\protect\definitionname}
  \theoremstyle{definition}
  \newtheorem{example}[thm]{\protect\examplename}
  \theoremstyle{remark}
  \newtheorem{rem}[thm]{\protect\remarkname}
  \theoremstyle{plain}
  \newtheorem{lem}[thm]{\protect\lemmaname}
\newenvironment{keywords}{ \noindent\footnotesize\textbf{Keywords and phrases:}}{}
\newenvironment{class}{\noindent\footnotesize\textbf{Mathematics subject classification 2010:}}{}
\newcommand*{\dive}{\operatorname{div}}
\newcommand*{\curl}{\operatorname{curl}}
\newcommand*{\grad}{\operatorname{grad}}
\DeclareMathAccent{\Circ}{\mathalpha}{operators}{"17}
\renewcommand{\Re}{\operatorname{\mathfrak{Re}}}
\renewcommand{\tilde}{\widetilde}
\renewcommand*{\epsilon}{\varepsilon}
\renewcommand*{\rho}{\varrho}
\author{Marcus Waurick}
  \providecommand{\definitionname}{Definition}
  \providecommand{\examplename}{Example}
  \providecommand{\lemmaname}{Lemma}
  \providecommand{\remarkname}{Remark}
\providecommand{\theoremname}{Theorem}
\begin{document}
\makepreprinttitlepage

\author{ Marcus Waurick \\ Institut f\"ur Analysis, Fachrichtung Mathematik\\ Technische Universit\"at Dresden\\ Germany\\  marcus.waurick@tu-dresden.de }

\title{G-convergence of linear differential equations.}

\maketitle
\begin{abstract} \textbf{Abstract.} We discuss $G$-convergence of
linear integro-differential-algebaric equations in Hilbert spaces.
We show under which assumptions it is generic for the limit equation
to exhibit memory effects. Moreover, we investigate which classes
of equations are closed under the process of $G$-convergence. The
results have applications to the theory of homogenization. As an example
we treat Maxwell's equation with the Drude-Born-Fedorov constitutive
relation.\end{abstract}

\begin{keywords} $G$-convergence, integro-differential-algebraic
equations, homogenization, integral equations, Maxwell's equations
\end{keywords}

\begin{class} 34L99 (Ordinary differential operators), 34E13 (Ordinary
differential equations, multiple scale methods), 34A08 (Fractional
differential equations), 35B27 (Partial differential equations, homogenization;
equations in media with periodic structure), 35Q61 (Maxwell's equations),
45A05 (Linear integral equations)  \end{class}

\newpage

\tableofcontents{} 

\newpage

\section{Introduction}

We discuss some issues occuring in the homogenization of linear integro-differential
equations in Hilbert spaces. Similar to \cite{Piccinini1978,Piccinini77,Tartar_Mem,TartarNonlHom}
we understand homogenization theory as the study of limits of sequences
of equations in the sense of $G$-convergence. Whereas in \cite{Piccinini1978,Piccinini77}
(non-linear) ordinary differential equations in finite-dimensional
space are considered, we choose the perspective given in \cite{Mascarenhas1984,Petrini1998,Tartar_Mem,TartarNonlHom,Anton,Jiang,Jiang05}.
 The abstract setting is the following.
\begin{defn}[{$G$-convergence, {\cite[p.\ 74]{Gcon1}, \cite{Waurick2012aDIE}}}]
 Let $H$ be a Hilbert space. Let $(A_{n}:D(A_{n})\subseteqq H\to H)_{n}$
be a sequence of continuously invertible linear operators onto $H$
and let $B:D(B)\subseteqq H\to H$ be linear and one-to-one. We say
that $(A_{n})_{n}$ \emph{$G$-converges to $B$} if $\left(A_{n}^{-1}\right)_{n}$
converges in the weak operator topology to $B^{-1}$, i.e., for all
$f\in H$ the sequence $(A_{n}^{-1}(f))_{n}$ converges weakly to
some $u$, which satisfies $u\in D(B)$ and $B(u)=f$. $B$ is called
the%
\footnote{Note that the $G$-limit is uniquely determined, cf. \cite[Proposition 4.1]{Waurick2012aDIE}.%
} \emph{$G$-limit} of $(A_{n})_{n}$ and we write $A_{n}\stackrel{G}{\longrightarrow}B$. 
\end{defn}
Our starting point will be equations of the form
\[
\partial_{0}\mathcal{M}u+\mathcal{N}u=f,
\]
where $\mathcal{M},\mathcal{N}$ are suitable operators in space-time
and $\partial_{0}$ is the time-derivative established in a Hilbert
space setting to be specified below (see also \cite{Picard,Kalauch}).
In the usual framework of homogenization theory, one assumes $\mathcal{M}$
and $\mathcal{N}$ to be multiplication operators in space-time, i.e.,
there are mappings $a$ and $b$ such that $\mathcal{M}=a(\cdot)$
and $\mathcal{N}=b(\cdot)$. Assuming well-posedness of the above
equation, i.e., existence, uniqueness and continuous dependence on
the right-hand side $f$ in a suitable (Hilbert space) framework,
one is interested in the sequence of equations 
\begin{equation}
\partial_{0}\mathcal{M}_{n}u_{n}+\mathcal{N}_{n}u_{n}=f\label{eq:intr_hom}
\end{equation}
with $\mathcal{M}_{n}=a(n\cdot)$ and correspondingly for $\mathcal{N}_{n}$
yielding a sequence of solutions $(u_{n})_{n}$. The question arises,
whether the sequence $(u_{n})_{n}$ converges and if so whether the
respective limit $u$ satisfies an equation of similar form. A formal
computation in (\ref{eq:intr_hom}) reveals that 
\[
u_{n}=\left(\partial_{0}\mathcal{M}_{n}+\mathcal{N}_{n}\right)^{-1}f.
\]
Thus, if we show the convergence of $\left(\partial_{0}\mathcal{M}_{n}+\mathcal{N}_{n}\right)^{-1}$
in the weak opertor topology to some one-to-one mapping $C\eqqcolon B^{-1}$,
we deduce the weak convergence of $(u_{n})_{n}$ the limit of which
denoted by $u$ satisfies
\[
Bu=f.
\]
In other words, $\left(\partial_{0}\mathcal{M}_{n}+\mathcal{N}_{n}\right)$
$G$-converges to $B$.

In this article we think of $\left(\mathcal{M}_{n}\right)_{n}$ and
$\left(\mathcal{N}_{n}\right)_{n}$ to be bounded sequences of bounded
linear operators in space-time. We want to discuss assumptions on
these sequences guaranteeing a compactness result with respect to
$G$-convergence. Moreover, we outline possible assumptions yielding
the closedness under $G$-convergence and give examples for equations,
where the associated sequences of differential operators itself are
$G$-convergent. We exemplify our findings with examples from the
literature \cite{Mascarenhas1984,Tartar_Mem,TartarNonlHom,Jiang,Jiang05},
highlight possible connections and give an example for a Drude-Born-Fedorov
model in electro-magentism (see \cite{Picard-Freymond} and Example
\ref{Drude-Born-Fedorov-Ex} below), where homogenization theorems
are -- to the best of the author's knowledge -- not yet available
in the literature. We will also underscore the reason of the limit
equation to exhibit memory effects. An heuristic explanation is the
lack of continuity of computing the inverse with respect to the weak
operator topology.

In Section \ref{sec:Setting-and-main} we introduce the functional
analytic setting used for discussing integro-differential-algebraic
equations and state our main Theorems. We successively apply the results
from Section \ref{sec:Setting-and-main} to time-independent coefficients
(Section \ref{sec:Time-independent-coefficients}), time-translation
invariant coefficients (Section \ref{sec:Time-translation-invariant-coeff})
and time-dependent coefficients (Section \ref{sec:Time-dependent-coefficients}).
In each of the Sections \ref{sec:Time-independent-coefficients},
\ref{sec:Time-translation-invariant-coeff} and \ref{sec:Time-dependent-coefficients}
we give examples and discuss whether particular classes of equations
are closed under limits with respect to $G$-convergence. In Section
\ref{sec:Proof-of-the} we prove the main theorems of Section \ref{sec:Setting-and-main}.
The respective proofs rely on elementary Hilbert space theory.

\section{Setting and main theorems\label{sec:Setting-and-main}}

The key fact giving way for computations is the possibility of establishing
the time-derivative as a continuously invertible normal operator in
an exponentially weighted Hilbert space. %
{} For $\nu>0$ we define the operator 
\[
\partial_{0}\colon H_{\nu,1}(\mathbb{R})\subseteqq L_{\nu}^{2}(\mathbb{R})\to L_{\nu}^{2}(\mathbb{R}),f\mapsto f',
\]
where $L_{\nu}^{2}(\mathbb{R})\coloneqq L^{2}(\mathbb{R},\exp(-2\nu\cdot)\lambda)$
is the space of square-integrable functions with respect to the weighted
Lebesgue measure $\exp(-2\nu\cdot)\lambda$ and $H_{\nu,1}(\mathbb{R})$
is the space of $L_{\nu}^{2}(\mathbb{R})$-functions with distributional
derivative in $L_{\nu}^{2}(\mathbb{R})$. We denote the scalar-product
on $L_{\text{\ensuremath{\nu}}}^{2}(\mathbb{R})$ by $\langle\cdot,\cdot\rangle_{\nu}$
and the induced norm by $\left|\cdot\right|_{\nu}$. Of course the
operator $\partial_{0}$ depends on the scalar $\nu$. However, since
it will be obvious from the context, which value of $\nu$ is chosen,
we will omit the explicit reference to it in the notation of $\partial_{0}.$
It can be shown that $\partial_{0}$ is continuously invertible (\cite[Example 2.3]{Picard}
or \cite[Corollary 2.5]{Kalauch}). The norm bound of the inverse
is $1/\nu$. Of course the latter construction can be extended to
the Hilbert-space-valued case of $L_{\nu}^{2}(\mathbb{R};H)$-functions%
\footnote{We will also use the notation $\langle\cdot,\cdot\rangle_{\nu}$ and
$\left|\cdot\right|_{\nu}$ for the scalar product and norm in $L_{\nu}^{2}(\mathbb{R};H)$,
respectively.%
}. We will use the same notation for the time-derivative. In order
to formulate our main theorems related to the theory of homogenization
of ordinary differential equations, we need to introduce the following
notion.
\begin{defn}
Let $H_{0},H_{1}$ be Hilbert spaces, $\nu_{0}>0.$ We call a linear
mapping 
\begin{equation}
M\colon D(M)\subseteqq\bigcap_{\nu>0}L_{\nu}^{2}(\mathbb{R};H_{0})\to\bigcap_{\nu\geqq\nu_{0}}L_{\nu}^{2}(\mathbb{R};H_{1})\label{eq:evolutionary_map}
\end{equation}
\emph{evolutionary (at $\nu_{1}>0$)}%
\footnote{The notion ``evolutionary'' is inspired by the considerations in
\cite[Definition 3.1.14, p. 91]{Picard}, where polynomial expressions
in partial differential operators are considered.%
} if $D(M)$ is dense in $L_{\nu}^{2}(\mathbb{R};H)$ for all $\nu\geqq\nu_{1}$,
if $M$ extends to a bounded linear operator from $L_{\nu}^{2}(\mathbb{R};H_{0})$
to $L_{\nu}^{2}(\mathbb{R};H_{1})$ for all $\nu\geqq\nu_{1}$ and
is such that%
\footnote{For a linear operator $A$ from $L_{\nu}^{2}(\mathbb{R};H_{0})$ to
$L_{\nu}^{2}(\mathbb{R};H_{1})$ we denote its operator norm by $\left\Vert A\right\Vert _{L(L_{\nu}^{2}(\mathbb{R};H_{0}),L_{\nu}^{2}(\mathbb{R};H_{1}))}$.
If the spaces $H_{0}$ and $H_{1}$ are clear from the context, we
shortly write $\left\Vert A\right\Vert _{L(L_{\nu}^{2})}$. %
} 
\[
\limsup_{\nu\to\infty}\left\Vert M\right\Vert _{L(L_{\nu}^{2}(\mathbb{R};H_{0}),L_{\nu}^{2}(\mathbb{R};H_{1}))}<\infty.
\]
 The continuous extension of $M$ to some $L_{\nu}^{2}$ will also
be denoted by $M$. In particular, we will not distinguish notationally
between the different realizations of $M$ as a bounded linear operator
for different $\nu$ as these realizations coincide on a dense subset.
We define the set
\[
L_{\text{ev},\nu_{1}}(H_{0},H_{1})\coloneqq\{M;M\text{ is as in (\ref{eq:evolutionary_map}) and is evolutionary at }\nu_{1}\}.
\]
We abbreviate $L_{\text{ev},\nu_{1}}(H_{0})\coloneqq L_{\text{ev},\nu_{1}}(H_{0},H_{0})$.
A subset $\mathfrak{M}\subseteqq L_{\text{ev},\nu_{1}}(H_{0},H_{1})$
is called \emph{bounded} if $\limsup_{\nu\to\infty}\sup_{M\in\mathfrak{M}}\|M\|_{L(L_{\nu}^{2})}<\infty.$
A family $(M_{\iota})_{\iota\in I}$ in $L_{\text{ev},\nu_{1}}(H_{0},H_{1})$
is called \emph{bounded }if $\{M_{\iota};\iota\in I\}$ is bounded. 
\end{defn}
Note that $L_{\text{ev},\nu_{1}}(H_{0},H_{1})\subseteqq L_{\text{ev},\nu_{2}}(H_{0},H_{1})$
for all $\nu_{1}\leqq\nu_{2}.$ We give some examples of evolutionary
mappings.
\begin{example}
\label{Ex:time-in}Let $H$ be a Hilbert space and $M_{0}\in L(H)$.
Then there is a canonical extension $M$ of $M_{0}$ to $L_{\nu}^{2}(\mathbb{R};H)$-functions
such that $\left(M\phi\right)(t)\coloneqq\left(M_{0}\phi(t)\right)$
for all $\phi\in L_{\nu}^{2}(\mathbb{R};H)$ and a.e.\ $t\in\mathbb{R}$.
In that way $M\in\bigcap_{\nu>0}L_{\text{ev},\nu}(H)$. Henceforth,
we shall not distinguish notationally between $M$ and $M_{0}$.
\end{example}
~
\begin{example}
\label{time-dep}Let $H$ be a Hilbert space and let $L_{s}^{\infty}(\mathbb{R};L(H))$
be the space of bounded strongly measurable functions from $\mathbb{R}$
to $L(H)$. For $A\in L_{s}^{\infty}(\mathbb{R};L(H))$ we denote
the associated multiplication operator on $L_{\nu}^{2}(\mathbb{R};H)$
by $A(m_{0})$. Thus, also in this case, $A(m_{0})\in\bigcap_{\nu>0}L_{\text{ev},\nu}(H)$.
\end{example}
~
\begin{example}
\label{time-inv}For $\nu_{0}>0$ let $g\in L_{\nu_{0}}^{1}(\mathbb{R}_{>0})\coloneqq\{g\in L_{\text{loc}}^{1}(\mathbb{R});g=0\mbox{\text{ on }\ensuremath{\mathbb{R}_{<0}}},\int_{\mathbb{R}}|g(t)|e^{-\nu t}dt<\infty\}.$
By Young's inequality or by Example \ref{Ex:conv} below, we deduce
that $g*\in L_{\text{ev},\nu_{0}}(\mathbb{C}),$ where $g*f$ denotes
the convolution of some function $f$ with $g$.
\end{example}
To formulate our main theorems, we denote the weak operator topology
by $\tau_{\text{w}}$. Convergence within this topology is denoted
by $\stackrel{\tau_{\textnormal{w}}}{\to}$. Limits within this topology
are written as $\tau_{\textnormal{w}}\textnormal{-}\lim$. We will
extensively use the fact that for a separable Hilbert space $H$ bounded
subsets of $L(H)$, which are $\tau_{\textnormal{w}}$-closed, are
$\tau_{\textnormal{w}}$-sequentially compact. Our main theorems concerning
the $G$-convergence of differential equations read as follows.
\begin{thm}
\label{thm:first-hom_thm} Let $H$ be a separable Hilbert space,
$\nu_{0}>0$. Let $\left(\mathcal{M}_{n}\right)_{n}$, $\left(\mathcal{N}_{n}\right)_{n}$
be bounded sequences in $L_{\textnormal{ev},\nu_{0}}(H)$. Assume
there exists $c>0$ such that for all $n\in\mathbb{N}$ and $\nu\geqq\nu_{0}$
\[
\Re\langle\mathcal{M}_{n}\phi,\phi\rangle_{\nu}\geqq c\langle\phi,\phi\rangle_{\nu}\quad(\phi\in L_{\nu}^{2}(\mathbb{R};H)).
\]
Then there exists $\nu\geqq\nu_{0}$ and a subsequence $(n_{k})_{k}$
of $(n)_{n}$ such that
\[
\partial_{0}\mathcal{M}_{n_{k}}+\mathcal{N}_{n_{k}}\stackrel{G}{\longrightarrow}\partial_{0}\mathcal{M}_{hom,0}^{-1}+\partial_{0}\sum_{j=1}^{\infty}\left(-\sum_{\ell=1}^{\infty}\mathcal{M}_{hom,0}^{-1}\mathcal{M}_{hom,\ell}\right)^{j}\mathcal{M}_{hom,0}^{-1},
\]
as $k\to\infty$ in $L_{\nu}^{2}(\mathbb{R};H)$, where 
\[
\mathcal{M}_{hom,0}=\tau_{\textnormal{w}}\textnormal{-}\lim_{k\to\infty}\mathcal{M}_{n_{k}}^{-1}
\]
and 
\[
\mathcal{M}_{hom,\ell}=\tau_{\textnormal{w}}\textnormal{-}\lim_{k\to\infty}\mathcal{M}_{n}^{-1}\left(-\partial_{0}^{-1}\mathcal{N}_{n}\mathcal{M}_{n}^{-1}\right)^{\ell}.
\]
\end{thm}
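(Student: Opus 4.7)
The proof rests on two Neumann-series expansions connected by a diagonal extraction of weak operator limits. For the first series, the accretivity hypothesis makes each $\mathcal{M}_n$ continuously invertible with $\|\mathcal{M}_n^{-1}\|_{L(L_\nu^2)}\leqq 1/c$ uniformly in $n$ and $\nu\geqq\nu_0$. Boundedness of the two sequences provides $\nu_1\geqq\nu_0$ and $C>0$ with $\sup_n\|\mathcal{N}_n\|_{L(L_\nu^2)}\leqq C$ and $\sup_n\|\mathcal{M}_n\|_{L(L_\nu^2)}\leqq C$ for $\nu\geqq\nu_1$. Using $\|\partial_0^{-1}\|_{L(L_\nu^2)}\leqq 1/\nu$, I fix $\nu$ so large that $q\coloneqq C/(c\nu)<1/2$. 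The factorization $\partial_0\mathcal{M}_n+\mathcal{N}_n=\partial_0\mathcal{M}_n(I+\mathcal{M}_n^{-1}\partial_0^{-1}\mathcal{N}_n)$ combined with the Neumann expansion then yields
\[
(\partial_0\mathcal{M}_n+\mathcal{N}_n)^{-1}=\sum_{j=0}^\infty \mathcal{M}_n^{-1}\bigl(-\partial_0^{-1}\mathcal{N}_n\mathcal{M}_n^{-1}\bigr)^j\partial_0^{-1},
\]
with the $j$th summand bounded by $q^j/(c\nu)$ uniformly in $n$.

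Next, for each $\ell\geqq 0$ the sequence $A_n^{(\ell)}\coloneqq\mathcal{M}_n^{-1}(-\partial_0^{-1}\mathcal{N}_n\mathcal{M}_n^{-1})^\ell$ satisfies $\|A_n^{(\ell)}\|\leqq q^\ell/c$. Using $\tau_{\textnormal{w}}$-sequential compactness of norm-bounded subsets of $L(L_\nu^2(\mathbb{R};H))$ (available because $L_\nu^2(\mathbb{R};H)$ is separable) together with a Cantor diagonal extraction, I choose a subsequence $(n_k)_k$ along which $A_{n_k}^{(\ell)}\stackrel{\tau_{\textnormal{w}}}{\to}\mathcal{M}_{hom,\ell}$ simultaneously for every $\ell\in\mathbb{N}_0$. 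Lower semicontinuity of the operator norm under $\tau_{\textnormal{w}}$ preserves $\|\mathcal{M}_{hom,\ell}\|\leqq q^\ell/c$.

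I then pass to the weak operator limit in the first expansion. Right-multiplication by the fixed operator $\partial_0^{-1}$ preserves $\tau_{\textnormal{w}}$-limits, and the uniform geometric bound makes the tail of the series vanish in operator norm uniformly in $k$; so the sum and the $\tau_{\textnormal{w}}$-limit can be exchanged, giving
\[
(\partial_0\mathcal{M}_{n_k}+\mathcal{N}_{n_k})^{-1}\stackrel{\tau_{\textnormal{w}}}{\to}\Bigl(\sum_{\ell=0}^\infty \mathcal{M}_{hom,\ell}\Bigr)\partial_0^{-1}\eqqcolon S\partial_0^{-1}.
\]
Coercivity transfers from $\mathcal{M}_n^{-1}$ under $\tau_{\textnormal{w}}$-limits, so $\mathcal{M}_{hom,0}$ is itself continuously invertible; and since $\|\sum_{\ell\geqq 1}\mathcal{M}_{hom,\ell}\|\leqq \sum_{\ell\geqq 1}q^\ell/c$ can be made arbitrarily small by further enlarging $\nu$, the factorization $S=\mathcal{M}_{hom,0}\bigl(I+\mathcal{M}_{hom,0}^{-1}\sum_{\ell\geqq 1}\mathcal{M}_{hom,\ell}\bigr)$ admits a second Neumann inversion
\[
S^{-1}=\sum_{j=0}^\infty\Bigl(-\sum_{\ell=1}^\infty\mathcal{M}_{hom,0}^{-1}\mathcal{M}_{hom,\ell}\Bigr)^j\mathcal{M}_{hom,0}^{-1}.
\]
This identifies $B\coloneqq\partial_0 S^{-1}$ with the operator in the statement, and since $B$ is invertible with $B^{-1}=S\partial_0^{-1}$, it is in particular one-to-one, so $\partial_0\mathcal{M}_{n_k}+\mathcal{N}_{n_k}\stackrel{G}{\longrightarrow}B$ by definition.

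The main obstacle is handling the two nested Neumann series together with the $\tau_{\textnormal{w}}$-passage: one has to ensure the geometric bound $q^\ell/c$ on $\mathcal{M}_{hom,\ell}$ is strong enough to allow both the interchange of sum and weak limit in the first expansion and the convergence of the inner series defining $S^{-1}$, and to verify that coercivity is preserved under $\tau_{\textnormal{w}}$-limits so that $\mathcal{M}_{hom,0}^{-1}$ exists. Once these points are in place, the remainder is routine operator calculus.
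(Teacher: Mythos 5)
Your argument is correct and follows essentially the same route as the paper's proof: expand the solution operator $\left(\partial_{0}\mathcal{M}_{n}+\mathcal{N}_{n}\right)^{-1}$ as a Neumann series in $-\partial_{0}^{-1}\mathcal{N}_{n}\mathcal{M}_{n}^{-1}$, extract a diagonal subsequence using $\tau_{\textnormal{w}}$-sequential compactness, pass to the limit term by term using the uniform geometric tail bound, and then invert the limit operator $S$ by a second Neumann series, with the invertibility of $\mathcal{M}_{hom,0}$ secured by the fact that the positive definiteness of $\mathcal{M}_{n}^{-1}$ (from Lemma \ref{lem:pos-def}) survives weak operator limits. The only cosmetic difference is that the paper phrases the computation in terms of the solutions $u_{n}$ for $f\in C_{\infty,c}(\mathbb{R};H)$ rather than directly at the operator level.
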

\begin{rem}
(a) It should be noted that the positive-definiteness condition in
Theorem \ref{thm:second-hom_thm} is a well-posedness condition, i.e.,
a condition for $\partial_{0}\mathcal{M}_{n_{k}}+\mathcal{N}_{n_{k}}$
to be continuously invertible for all $\nu$ sufficiently large. Indeed,
for $f\in L_{\nu}^{2}(\mathbb{R};H)$ and $u\in L_{\nu}^{2}(\mathbb{R};H)$
with 
\[
\left(\partial_{0}\mathcal{M}_{n}+\mathcal{N}_{n}\right)u=f
\]
 we multiply by $\partial_{0}^{-1}$ and get 
\[
\left(\mathcal{M}_{n}+\partial_{0}^{-1}\mathcal{N}_{n}\right)u=\partial_{0}^{-1}f.
\]
 The positive definiteness condition yields, see also Lemma \ref{lem:pos-def},
the invertibility of $\mathcal{M}_{n}$. Hence, we arrive at
\[
\left(1+\mathcal{M}_{n}^{-1}\partial_{0}^{-1}\mathcal{N}_{n}\right)u=\mathcal{M}_{n}^{-1}\partial_{0}^{-1}f.
\]
Choosing $\nu>0$ sufficiently large, we deduce that the operator
$\left(1+\mathcal{M}_{n}^{-1}\partial_{0}^{-1}\mathcal{N}_{n}\right)$
is continuously invertible with a Neumann series expression.

(b) If $\mathcal{N}=0$ in Theorem \ref{thm:first-hom_thm}, then
we deduce that equations of the form $\partial_{0}\mathcal{M}u=f$
are closed under the process of $G$-convergence. If $\mathcal{N}\neq0$,
then the above theorem suggests that this is not true for equations
of the form $\left(\partial_{0}\mathcal{M}+\mathcal{N}\right)u=f$.
However, if we consider $\partial_{0}\mathcal{M}+\mathcal{N}$ as
$\partial_{0}\left(\mathcal{M}+\partial_{0}^{-1}\mathcal{N}\right)$,
the equations under consideration in Theorem \ref{thm:first-hom_thm}
are closed under $G$-limits. Indeed, the limit may be represented
by 
\[
\partial_{0}\left(\mathcal{M}_{hom,0}^{-1}+\sum_{k=1}^{\infty}\left(-\sum_{\ell=1}^{\infty}\mathcal{M}_{hom,0}^{-1}\mathcal{M}_{hom,\ell}\right)^{k}\mathcal{M}_{hom,0}^{-1}\right).
\]
 In the forthcoming sections we will further elaborate the aspect
of closedness under $G$-limits.
\end{rem}
In system or control theory one is interested in differential-algebraic
systems, see e.g.\ \cite{JacZwart1}. We, thus, formulate the analogous
statement for (integro-differential-)algebraic systems.
\begin{thm}
\label{thm:second-hom_thm}Let $H_{0},H_{1}$ be separable Hilbert
spaces, $\nu_{0}>0$. Let $\left(\mathcal{M}_{n}\right)_{n},\left(\mathcal{N}_{n}^{ij}\right)_{n}$
be bounded sequences in $L_{\textnormal{ev},\nu_{0}}(H_{0})$ and
$L_{\textnormal{ev},\nu_{0}}(H_{j},H_{i})$, respectively ($i,j\in\{0,1\}$).
Assume there exists $c>0$ such that for all $n\in\mathbb{N}$ and
$\nu\geqq\nu_{0}$ we have for all $(\phi,\psi)\in L_{\nu}^{2}(\mathbb{R};H_{0}\oplus H_{1})$
\[
\Re\langle\mathcal{M}_{n}\phi,\phi\rangle_{\nu}\geqq c\left|\phi\right|_{\nu}^{2},\quad\Re\langle\mathcal{N}_{n}^{11}\psi,\psi\rangle_{\nu}\geqq c\left|\psi\right|_{\nu}^{2}.
\]
Then there exists $\nu\geqq\nu_{0}$ and a subsequence $(n_{k})_{k}$
of $(n)_{n}$ such that
\begin{align*}
 & \partial_{0}\left(\begin{array}{cc}
\mathcal{M}_{n_{k}} & 0\\
0 & 0
\end{array}\right)+\left(\begin{array}{cc}
\mathcal{N}_{n_{k}}^{00} & \mathcal{N}_{n_{k}}^{01}\\
\mathcal{N}_{n_{k}}^{10} & \mathcal{N}_{n_{k}}^{11}
\end{array}\right)\\
 & \stackrel{G}{\longrightarrow}\left(\begin{array}{cc}
\partial_{0} & 0\\
0 & 1
\end{array}\right)\left(\left(\begin{array}{cc}
\mathcal{M}_{hom,0,00}^{-1} & 0\\
0 & \mathcal{N}_{hom,-1,11}^{-1}
\end{array}\right)^{\phantom{\ell}}\right.\\
 & \quad\quad\left.+\sum_{\ell=1}^{\infty}\left(-\left(\begin{array}{cc}
\mathcal{M}_{hom,0,00}^{-1} & 0\\
0 & \mathcal{N}_{hom,-1,11}^{-1}
\end{array}\right)\mathcal{M}^{(1)}\right)^{\ell}\left(\begin{array}{cc}
\mathcal{M}_{hom,0,00}^{-1} & 0\\
0 & \mathcal{N}_{hom,-1,11}^{-1}
\end{array}\right)\right),
\end{align*}
where we put 
\[
\mathcal{N}_{n}\coloneqq\mathcal{N}_{n}^{00}-\mathcal{N}_{n}^{01}\left(\mathcal{N}_{n}^{11}\right)^{-1}\mathcal{N}_{n}^{10}\quad(n\in\mathbb{N})
\]
as well as 
\[
\mathcal{M}^{(1)}\coloneqq\left(\begin{array}{cc}
\sum_{\ell=1}^{\infty}\mathcal{M}_{hom,\ell,00} & \sum_{\ell=0}^{\infty}\mathcal{M}_{hom,\ell,01}\\
\sum_{\ell=0}^{\infty}\mathcal{M}_{hom,\ell,10} & \sum_{\ell=0}^{\infty}\mathcal{M}_{hom,\ell,11}
\end{array}\right)
\]
and 
\begin{align*}
\mathcal{M}_{hom,\ell,00} & =\tau_{\textnormal{w}}\text{-}\lim_{k\to\infty}\mathcal{M}_{n_{k}}^{-1}\left(-\partial_{0}^{-1}\mathcal{N}_{n_{k}}\mathcal{M}_{n_{k}}^{-1}\right)^{\ell},\\
\mathcal{M}_{hom,\ell,01} & =\tau_{\textnormal{w}}\text{-}\lim_{k\to\infty}-\mathcal{M}_{n_{k}}^{-1}\left(-\partial_{0}^{-1}\mathcal{N}_{n_{k}}\mathcal{M}_{n_{k}}^{-1}\right)^{\ell}\partial_{0}^{-1}\mathcal{N}_{n_{k}}^{01}\left(\mathcal{N}_{n_{k}}^{11}\right)^{-1},\\
\mathcal{M}_{hom,\ell,10} & =\tau_{\textnormal{w}}\text{-}\lim_{k\to\infty}-\left(\mathcal{N}_{n_{k}}^{11}\right)^{-1}\mathcal{N}_{n_{k}}^{10}\mathcal{M}_{n_{k}}^{-1}\left(-\partial_{0}^{-1}\mathcal{N}_{n_{k}}\mathcal{M}_{n_{k}}^{-1}\right)^{\ell},\\
\mathcal{M}_{hom,\ell,11} & =\tau_{\textnormal{w}}\text{-}\lim_{k\to\infty}\left(\mathcal{N}_{n_{k}}^{11}\right)^{-1}\mathcal{N}_{n_{k}}^{10}\mathcal{M}_{n_{k}}^{-1}\left(-\partial_{0}^{-1}\mathcal{N}_{n_{k}}\mathcal{M}_{n_{k}}^{-1}\right)^{\ell}\partial_{0}^{-1}\mathcal{N}_{n_{k}}^{01}\left(\mathcal{N}_{n_{k}}^{11}\right)^{-1},\\
\mathcal{N}_{hom,-1,11} & =\tau_{\textnormal{w}}\text{-}\lim_{k\to\infty}\left(\mathcal{N}_{n_{k}}^{11}\right)^{-1}.
\end{align*}
\end{thm}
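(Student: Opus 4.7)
The plan is to reduce the differential-algebraic system to the purely differential setting of Theorem \ref{thm:first-hom_thm} via a Schur complement argument, and then to identify the limit by block inversion. First, I would use the positive-definiteness of $\mathcal{N}_n^{11}$ together with Lemma \ref{lem:pos-def} to see that $(\mathcal{N}_n^{11})^{-1}$ exists and is bounded uniformly in $n$ (and in $\nu\geqq\nu_0$); hence $((\mathcal{N}_n^{11})^{-1})_n$ is bounded in $L_{\text{ev},\nu_0}(H_1)$. Solving the second row algebraically then shows that, for invertible $\mathcal{N}_n^{11}$, the system associated with the block operator is equivalent to $\psi=(\mathcal{N}_n^{11})^{-1}(g-\mathcal{N}_n^{10}\phi)$ together with $(\partial_0\mathcal{M}_n+\mathcal{N}_n)\phi=f-\mathcal{N}_n^{01}(\mathcal{N}_n^{11})^{-1}g$, where $\mathcal{N}_n=\mathcal{N}_n^{00}-\mathcal{N}_n^{01}(\mathcal{N}_n^{11})^{-1}\mathcal{N}_n^{10}$ is the Schur complement. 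The sequence $(\mathcal{N}_n)_n$ is bounded in $L_{\text{ev},\nu_0}(H_0)$, and together with $(\mathcal{M}_n)_n$ satisfies the hypotheses of Theorem \ref{thm:first-hom_thm}.

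Next, I would apply Theorem \ref{thm:first-hom_thm} to $(\mathcal{M}_n,\mathcal{N}_n)_n$ to extract a subsequence along which $(\partial_0\mathcal{M}_{n_k}+\mathcal{N}_{n_k})^{-1}$ converges in the weak operator topology on some $L_\nu^2(\mathbb{R};H_0)$, with all the weak limits defining $\mathcal{M}_{hom,\ell,00}$ existing. The $\tau_{\text{w}}$-sequential compactness of bounded sets of $L(L_\nu^2)$ (valid because $L_\nu^2(\mathbb{R};H_i)$ is separable) then permits a Cantor diagonal extraction so that the additional weak limits $\mathcal{M}_{hom,\ell,01}$, $\mathcal{M}_{hom,\ell,10}$, $\mathcal{M}_{hom,\ell,11}$ and $\mathcal{N}_{hom,-1,11}$ also exist along the same subsequence; all the operators entering these limits are products of uniformly bounded factors, so the necessary a priori bounds are at hand.

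With the subsequence fixed, I would compute $A_n^{-1}$ by the standard block-inversion formula using the Schur complement $S_n=\partial_0\mathcal{M}_n+\mathcal{N}_n$, namely
\[
A_n^{-1}=\begin{pmatrix}S_n^{-1} & -S_n^{-1}\mathcal{N}_n^{01}(\mathcal{N}_n^{11})^{-1}\\ -(\mathcal{N}_n^{11})^{-1}\mathcal{N}_n^{10}S_n^{-1} & (\mathcal{N}_n^{11})^{-1}+(\mathcal{N}_n^{11})^{-1}\mathcal{N}_n^{10}S_n^{-1}\mathcal{N}_n^{01}(\mathcal{N}_n^{11})^{-1}\end{pmatrix},
\]
then substitute the Neumann series provided by Theorem \ref{thm:first-hom_thm} for $S_{n_k}^{-1}$ and pass to the weak operator limit entry by entry. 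Here one uses that weak-operator limits survive composition with operators that converge weakly against a uniformly bounded factor, exactly as in the proof of Theorem \ref{thm:first-hom_thm}. Each resulting term can be matched to one of the $\mathcal{M}_{hom,\ell,ij}$ or to $\mathcal{N}_{hom,-1,11}^{-1}$.

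Finally, I would recognise the resulting limit as $B^{-1}$, where $B$ is the operator displayed in the statement. Concretely, after factoring out $\bigl(\begin{smallmatrix}\partial_0^{-1} & 0\\ 0 & 1\end{smallmatrix}\bigr)$ on the right, the remaining block operator is of the form $(I+D\mathcal{M}^{(1)})^{-1}D$ with $D=\operatorname{diag}(\mathcal{M}_{hom,0,00}^{-1},\mathcal{N}_{hom,-1,11}^{-1})$, whose Neumann expansion is precisely the series in the statement. The main obstacle I expect is the algebraic bookkeeping in this last step: carefully matching every term coming from $-\partial_0^{-1}\mathcal{N}_{n_k}\mathcal{M}_{n_k}^{-1}=-\partial_0^{-1}(\mathcal{N}_{n_k}^{00}-\mathcal{N}_{n_k}^{01}(\mathcal{N}_{n_k}^{11})^{-1}\mathcal{N}_{n_k}^{10})\mathcal{M}_{n_k}^{-1}$ to the off-diagonal $\mathcal{M}_{hom,\ell,01}$, $\mathcal{M}_{hom,\ell,10}$ and the $(1,1)$ correction $\mathcal{M}_{hom,\ell,11}$, while confirming that no additional weak-limit terms (for example products of two separately weakly converging factors) are required.
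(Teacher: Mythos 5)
Your proposal is correct and follows essentially the same route as the paper's proof: a Schur-complement block factorization of the system, substitution of the Neumann series from Theorem \ref{thm:first-hom_thm} for $\left(\partial_{0}\mathcal{M}_{n}+\mathcal{N}_{n}\right)^{-1}$, extraction of a common subsequence along which the whole products (not products of separate weak limits) converge, and finally factoring out $\operatorname{diag}(\partial_{0}^{-1},1)$ and inverting the limit via a Neumann series. The only points you leave implicit -- the invertibility of $\mathcal{M}_{hom,0,00}$ and $\mathcal{N}_{hom,-1,11}$ (which follows from Lemma \ref{lem:pos-def} since weak limits inherit the positive-definiteness bound) and the smallness of $\mathcal{M}^{(1)}$ for $\nu$ large -- are exactly the estimates the paper supplies, so no gap remains.
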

\begin{rem}
(a) As in Theorem \ref{thm:first-hom_thm} the positive definiteness
conditions in Theorem \ref{thm:second-hom_thm} serve as well-posed
conditions for the respective (integro-differential-algebraic) equations.
We will compute the respetive inverse in the proof of Theorem \ref{thm:second-hom_thm}.

(b) Note that, by the definition of $G$-convergence, both the Theorems
\ref{thm:first-hom_thm} and \ref{thm:second-hom_thm} implicitly
assert that the limit equations are well-posed, i.e., that the limit
operator is continuously invertible. In fact it will be the strategy
of the respective proofs to compute the limit of the respective solution
operators, which will be continuous linear operators and afterwards
inverting the limit.

(c) Assume that in Theorem \ref{thm:second-hom_thm} the expressions
$\mathcal{M}_{hom,\ell,00}$, \foreignlanguage{english}{$\mathcal{M}_{hom,\ell,01}$},
$\mathcal{M}_{hom,\ell,10}$, $\mathcal{M}_{hom,\ell,11},$ $\mathcal{N}_{hom,-1,11}$
can be computed without choosing subsequences. Then the sequence 
\[
\left(\partial_{0}\left(\begin{array}{cc}
\mathcal{M}_{n} & 0\\
0 & 0
\end{array}\right)+\left(\begin{array}{cc}
\mathcal{N}_{n}^{00} & \mathcal{N}_{n}^{01}\\
\mathcal{N}_{n}^{10} & \mathcal{N}_{n}^{11}
\end{array}\right)\right)_{n}
\]
 is $G$-convergent. Indeed, the latter follows with a subsequence
argument.

(c) Assuming $H_{1}=\{0\}$ and, as a consequence, $\mathcal{N}^{ij}=0$
for all $i,j\in\{0,1\}$ except $i=j=0$, we see that Theorem \ref{thm:second-hom_thm}
is more general than Theorem \ref{thm:first-hom_thm}. The generalization
in Theorem \ref{thm:second-hom_thm} is also needed in the theory
of homogenization of partial differential equations, see e.g.~\cite[Theorem 4.4]{Waurick2012aDIE}
for a more restrictive case. We give an example in the forthcoming
sections.
\end{rem}
For convenience, we include easy examples that show that the assumptions
in the above theorems are reasonable.
\begin{example}[Uniform positive definiteness condition does not hold, \cite{Waurick2012aDIE}]
 Let $H=\mathbb{C}$, $\nu>0$ and, for $n\in\mathbb{N}$, let $\mathcal{M}_{n}=\partial_{0}^{-1}\frac{1}{n}$,
$f\in L_{\nu}^{2}(\mathbb{R})\setminus\{0\}$. For $n\in\mathbb{N}$,
let $u_{n}\in L_{\nu}^{2}(\mathbb{R})$ be defined by 
\[
\partial_{0}\mathcal{M}_{n}u_{n}=\frac{1}{n}u_{n}=f.
\]
 Then $(u_{n})_{n}$ is not relatively weakly compact and contains
no weakly convergent subsequence. 
\end{example}
~
\begin{example}[Boundedness assumption does not hold]
 Let $H=\mathbb{C}$, $\nu>0$ and, for $n\in\mathbb{N}$, let $\mathcal{M}_{n}=\partial_{0}^{-1}n$,
$f\in L_{\nu}^{2}(\mathbb{R})$. For $n\in\mathbb{N}$, let $u_{n}\in L_{\nu}^{2}(\mathbb{R})$
be defined by 
\[
\partial_{0}\mathcal{M}_{n}u_{n}=nu_{n}=f.
\]
 Then $(u_{n})_{n}$ converges to $0$. Thus, a limit ``equation''
would be in fact the relation $\{0\}\times L_{\nu}^{2}(\mathbb{R})\subseteqq L_{\nu}^{2}(\mathbb{R})\oplus L_{\nu}^{2}(\mathbb{R})$.
\end{example}
We will now apply our main theorems to particular situations.

\section{Time-independent coefficients\label{sec:Time-independent-coefficients}}

In this section, we treat time-independent coefficients. That is to
say, we assume that the operators in the sequences under consideration
only act on the ``spatial'' Hilbert spaces $H_{0}$ and $H_{1}$
in Theorem \ref{thm:second-hom_thm} or $H$ in Theorem \ref{thm:first-hom_thm}.
More precisely and similar to Example \ref{Ex:time-in}, for a bounded
linear operator $M\in L(H_{0},H_{1})$ there is a (canonical) extension
to $L_{\nu}^{2}$-functions in the way that $(M\phi)(t)\coloneqq M(\phi(t))$
for $\phi\in L_{\nu}^{2}(\mathbb{R};H_{0})$ and a.e.\  $t\in\mathbb{R}$.
Thus $M$ is evolutionary (Example \ref{Ex:time-in}). We only state
the specialization of this situation for Theorem \ref{thm:first-hom_thm}.
The result reads as follows.
\begin{thm}
\label{thm:time-independent} Let $H$ be a separable Hilbert space,
$\nu_{0}>0$. Let $\left(M_{n}\right)_{n}$, $\left(N_{n}\right)_{n}$
be bounded sequences in $L(H)$. Assume there exists $c>0$ such that
for all $n\in\mathbb{N}$ 
\[
\Re\langle M_{n}\phi,\phi\rangle_{H}\geq c\langle\phi,\phi\rangle_{H}\quad(\phi\in H).
\]
Then there exists $\nu\geqq\nu_{0}$ and a subsequence $(n_{k})_{k}$
of $(n)_{n}$ such that
\[
\partial_{0}M_{n_{k}}+N_{n_{k}}\stackrel{G}{\longrightarrow}\partial_{0}M_{hom,0}^{-1}+\partial_{0}\sum_{j=1}^{\infty}\left(-\sum_{\ell=1}^{\infty}M_{hom,0}^{-1}M_{hom,\ell}\left(-\partial_{0}^{-1}\right)^{\ell}\right)^{j}M_{hom,0}^{-1},
\]
as $k\to\infty$ in $L_{\nu}^{2}(\mathbb{R};H)$, where 
\[
M_{hom,0}=\tau_{\textnormal{w}}\textnormal{-}\lim_{k\to\infty}M_{n_{k}}^{-1}
\]
 and 
\[
M_{hom,\ell}=\tau_{\textnormal{w}}\textnormal{-}\lim_{k\to\infty}M_{n_{k}}^{-1}\left(N_{n_{K}}M_{n_{K}}^{-1}\right)^{\ell}.
\]
\end{thm}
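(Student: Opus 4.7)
The plan is to reduce Theorem \ref{thm:time-independent} to Theorem \ref{thm:first-hom_thm} by embedding the time-independent operators $M_n, N_n \in L(H)$ into $\bigcap_{\nu>0}L_{\textnormal{ev},\nu}(H)$ via the canonical pointwise extension from Example \ref{Ex:time-in}. The boundedness of $(M_n)_n$ and $(N_n)_n$ in $L(H)$ immediately yields their boundedness as evolutionary families, since the operator norm of a pointwise multiplication on $L_\nu^2(\mathbb{R};H)$ coincides with its norm on $L(H)$. Integrating the pointwise inequality $\Re\langle M_n\phi(t),\phi(t)\rangle_H \geqq c|\phi(t)|^2$ against the weight $e^{-2\nu t}$ lifts the positive definiteness condition to $L_\nu^2(\mathbb{R};H)$ for every $\nu > 0$, so the hypotheses of Theorem \ref{thm:first-hom_thm} are met. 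Consequently there is a subsequence $(n_k)_k$ and $\nu \geqq \nu_0$ for which $\partial_0 M_{n_k} + N_{n_k}$ $G$-converges to the abstract limit expressed via $\mathcal{M}_{hom,\ell}$.

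The key structural observation is that every bounded operator $A$ on $H$, acting pointwise on $L_\nu^2(\mathbb{R};H)$, commutes with $\partial_0^{-1}$: indeed, $\partial_0^{-1}$ acts by integration in time while $A$ acts pointwise, and the identity $A\int f = \int Af$ holds by linearity and continuity of $A$. Applying this repeatedly we may rewrite
\[
\mathcal{M}_n^{-1}\bigl(-\partial_0^{-1}\mathcal{N}_n \mathcal{M}_n^{-1}\bigr)^\ell = M_n^{-1}\bigl(N_n M_n^{-1}\bigr)^\ell (-\partial_0^{-1})^\ell.
\]
After extracting a further subsequence (using that $H$ is separable, bounded subsets of $L(H)$ are $\tau_{\textnormal{w}}$-sequentially compact, and diagonalising in $\ell$), the operators $M_{n_k}^{-1}(N_{n_k}M_{n_k}^{-1})^\ell$ converge in the weak operator topology of $L(H)$ to some $M_{hom,\ell}$. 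Since composition from the right with the fixed bounded operator $(-\partial_0^{-1})^\ell$ on $L_\nu^2(\mathbb{R};H)$ is $\tau_{\textnormal{w}}$-continuous, it follows that
\[
\mathcal{M}_{hom,\ell} = M_{hom,\ell}(-\partial_0^{-1})^\ell.
\]

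It remains to substitute this identity into the $G$-limit formula provided by Theorem \ref{thm:first-hom_thm}. Using once more that $M_{hom,0}^{-1}$ commutes with $\partial_0^{-1}$, one obtains
\[
\mathcal{M}_{hom,0}^{-1}\mathcal{M}_{hom,\ell} = M_{hom,0}^{-1}M_{hom,\ell}(-\partial_0^{-1})^\ell,
\]
and plugging this back into the Neumann-type expression yields precisely the claimed formula. The main technical point is the bookkeeping needed to identify abstract subsequential weak operator limits on $L_\nu^2(\mathbb{R};H)$ with the corresponding weak operator limits on $L(H)$, which is a routine use of the density of simple tensors $\phi \otimes \chi_I$; no new analytic difficulty arises beyond what is already encoded in Theorem \ref{thm:first-hom_thm}.
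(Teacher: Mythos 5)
Your proposal is correct and follows essentially the same route as the paper: the paper's proof likewise reduces to Theorem \ref{thm:first-hom_thm} by observing that time-independent operators commute with $\partial_{0}^{-1}$ and that the positive-definiteness estimate lifts from $H$ to $L_{\nu}^{2}(\mathbb{R};H)$. Your additional bookkeeping identifying the weak operator limits on $L(H)$ with those of the extensions on $L_{\nu}^{2}(\mathbb{R};H)$ is a detail the paper leaves implicit, and it is handled correctly.
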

\begin{proof}
At first observe that $M\partial_{0}^{-1}=\partial_{0}^{-1}M$ for
all bounded linear operators $M\in L(H)$. Moreover, the estimate
$\Re\langle M\phi,\phi\rangle_{H}\geqq c\langle\phi,\phi\rangle_{H}$
for $\phi\in H$ also carries over to the analogous one for $\phi\in L_{\nu}^{2}(\mathbb{R};H)$
and the extended $M$. Hence, the result follows from Theorem \ref{thm:first-hom_thm}.\end{proof}
\begin{rem}
As it has already been observed in \cite{Mascarenhas1984,TartarNonlHom},
the class of equations treated in Theorem \ref{thm:time-independent}
is \emph{not }closed under $G$-convergence in general. The next example
shows that this effect only occurs if the Hilbert space $H$ is infinite-dimensional
and the convergence of $\left(M_{n}\right)_{n}$ and $\left(N_{n}\right)_{n}$
is ``weak enough'' in a sense to be specified below.\end{rem}
\begin{example}
Assume that $H$ is finite-dimensional. Then $(M_{n})_{n}$ and $(N_{n})_{n}$
are a mere bounded sequences of matrices with constant coefficients.
In particular, the weak operator topology coincides with the topology
induced by the operator norm. Hence, the processes of computing the
inverse and computing the limit interchange and multiplication is
a continuous process as well. Thus, assuming $(M_{n})_{n}$ and $(N_{n})_{n}$
to be convergent with the respective limits $M$ and $N$, we compute 

{\footnotesize
\begin{align*}
 & \partial_{0}M_{hom,0}^{-1}+\partial_{0}\sum_{j=1}^{\infty}\left(-\sum_{\ell=1}^{\infty}M_{hom,0}^{-1}M_{hom,\ell}\left(-\partial_{0}^{-1}\right)^{\ell}\right)^{j}M_{hom,0}^{-1}\\
 & =\partial_{0}\left(\tau_{\textnormal{w}}\textnormal{-}\lim_{k\to\infty}M_{n_{k}}^{-1}\right)^{-1}\\
 & \quad+\partial_{0}\sum_{j=1}^{\infty}\left(-\sum_{\ell=1}^{\infty}\left(\tau_{\textnormal{w}}\textnormal{-}\lim_{k\to\infty}M_{n_{k}}^{-1}\right)^{-1}\left(\tau_{\textnormal{w}}\textnormal{-}\lim_{k\to\infty}M_{n_{k}}^{-1}\left(N_{n_{k}}M_{n_{k}}^{-1}\right)^{\ell}\right)\left(-\partial_{0}^{-1}\right)^{\ell}\right)^{j}\left(\tau_{\textnormal{w}}\textnormal{-}\lim_{k\to\infty}M_{n_{k}}^{-1}\right)^{-1}\\
 & =\partial_{0}\left(\lim_{k\to\infty}M_{n_{k}}^{-1}\right)^{-1}\\
 & \quad+\partial_{0}\sum_{j=1}^{\infty}\left(-\sum_{\ell=1}^{\infty}\left(\lim_{k\to\infty}M_{n_{k}}^{-1}\right)^{-1}\left(\lim_{k\to\infty}M_{n_{k}}^{-1}\left(N_{n_{k}}M_{n_{k}}^{-1}\right)^{\ell}\right)\left(-\partial_{0}^{-1}\right)^{\ell}\right)^{j}\left(\lim_{k\to\infty}M_{n_{k}}^{-1}\right)^{-1}\\
 & =\partial_{0}M+\partial_{0}\sum_{j=1}^{\infty}\left(-\sum_{\ell=1}^{\infty}M\left(M^{-1}\left(NM^{-1}\right)^{\ell}\right)\left(-\partial_{0}^{-1}\right)^{\ell}\right)^{j}M\\
 & =\partial_{0}M+\partial_{0}\sum_{j=1}^{\infty}\left(-\sum_{\ell=1}^{\infty}\left(NM^{-1}\right)^{\ell}\left(-\partial_{0}^{-1}\right)^{\ell}\right)^{j}M=\partial_{0}\sum_{j=0}^{\infty}\left(-\sum_{\ell=1}^{\infty}\left(NM^{-1}\right)^{\ell}\left(-\partial_{0}^{-1}\right)^{\ell}\right)^{j}M\\
 & =\partial_{0}\left(1+\sum_{\ell=1}^{\infty}\left(NM^{-1}\right)^{\ell}\left(-\partial_{0}^{-1}\right)^{\ell}\right)^{-1}M=\partial_{0}\left(\sum_{\ell=0}^{\infty}\left(-NM^{-1}\partial_{0}^{-1}\right)^{\ell}\right)^{-1}M\\
 & =\partial_{0}\left(\left(1+NM^{-1}\partial_{0}^{-1}\right)^{-1}\right)^{-1}M=\partial_{0}\left(M+N\partial_{0}^{-1}\right)=\partial_{0}M+N.
\end{align*}
}

Thus, in finite-dimensional spaces, the above theorem restates the
continuous dependence of the solution on the coefficients. Note that
we only used that multiplication and computing the inverse are continuous
operations. Hence, the above calculation literally expresses the fact
of continuous dependence on the coefficients if $H$ is infinite-dimensional
and the sequences $\left(M_{n}\right)_{n}$ and $\left(N_{n}\right)_{n}$
converge in the strong operator topology. Thus, one can only expect
that the limit expression differs from the one, which one might expect,
if the actual convergence of the operators involved is strictly weaker
than in the strong operator topology.
\end{example}
We will turn to a more sophisticated example. For this we recall the
concept of periodicity in $\mathbb{R}^{n}$, see e.g.\ \cite{CioDon}.
\begin{defn}
Let $a\colon\mathbb{R}^{n}\to\mathbb{C}^{m\times m}$ be bounded and
measurable. $a$ is called \emph{$]0,1[^{n}$-periodic}, if for all
$x\in\mathbb{R}^{n}$ and $k\in\mathbb{Z}^{n}$ we have $a(x+k)=a(x)$.\emph{ }
\end{defn}
Moreover, recall the following well-known convergence result on periodic
mappings, cf.\ e.g.\ \cite[Theorem 2.6]{CioDon}. 
\begin{thm}
\label{thm:per}Let $a\colon\mathbb{R}^{n}\to\mathbb{C}^{m\times m}$
be bounded and measurable\emph{ and $]0,1[^{n}$-periodic. Then $\left(a(k\cdot)\right)_{k}$
converges in $L^{\infty}(\mathbb{R}^{n})^{m\times m}$ $*$-weakly
to the integral mean $\int_{[0,1]^{n}}a(y)dy$.}\end{thm}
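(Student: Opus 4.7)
The plan is to reduce the matrix-valued statement to scalar-valued $*$-weak convergence in $L^\infty(\mathbb{R}^n)$, and then use a density/approximation argument that exploits the uniform $L^\infty$-bound of the sequence $(a(k\cdot))_k$.

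First I would observe that $*$-weak convergence in $L^\infty(\mathbb{R}^n)^{m\times m}$ is just componentwise $*$-weak convergence in $L^\infty(\mathbb{R}^n)$. So without loss of generality one may assume $m=1$, i.e.\ $a\colon\mathbb{R}^n\to\mathbb{C}$ is bounded, measurable, and $]0,1[^n$-periodic. Since $\|a(k\cdot)\|_{L^\infty}=\|a\|_{L^\infty}$, the sequence of functionals $\phi\mapsto\int a(kx)\phi(x)\,dx$ on $L^1(\mathbb{R}^n)$ is uniformly bounded. Hence, by a standard $3\epsilon$-argument, it suffices to verify convergence of
\[
\int_{\mathbb{R}^n}a(kx)\phi(x)\,dx\longrightarrow\Bigl(\int_{[0,1]^n}a(y)\,dy\Bigr)\int_{\mathbb{R}^n}\phi(x)\,dx\qquad(k\to\infty)
\]
on a dense subset of $L^1(\mathbb{R}^n)$.

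As dense subset I would take the $\mathbb{C}$-linear span of indicator functions of cubes of the form $Q=\prod_{j=1}^n[\alpha_j,\beta_j]$ with rational $\alpha_j,\beta_j$. By linearity it then suffices to treat $\phi=\chi_Q$ for a single such cube. For such $Q$ and $k$ large enough so that $k\alpha_j,k\beta_j\in\mathbb{Z}$ for all $j$ (which occurs along an arithmetic subsequence), the change of variables $y=kx$ yields
\[
\int_Q a(kx)\,dx=k^{-n}\int_{kQ}a(y)\,dy,
\]
and $kQ$ is a disjoint union of $k^n\,|Q|$ translates of the fundamental cell $[0,1]^n$. By the $]0,1[^n$-periodicity of $a$, each such translate contributes $\int_{[0,1]^n}a(y)\,dy$, giving $\int_Q a(kx)\,dx=|Q|\int_{[0,1]^n}a(y)\,dy$ exactly for those $k$. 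For general large $k$ one splits $kQ$ into its "integer part" (a union of fundamental cells) plus a boundary layer whose Lebesgue measure is $O(k^{n-1})$; the contribution of the boundary layer is bounded by $\|a\|_{L^\infty}$ times $O(k^{n-1})$, so after the factor $k^{-n}$ it tends to zero, and the integer part again produces the asserted limit.

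The main obstacle, such as it is, is purely bookkeeping: handling the boundary layer of $kQ$ cleanly so that rational cubes suffice, and justifying that indicator functions of such cubes are dense in $L^1(\mathbb{R}^n)$. Both are standard; the only delicate point is to use the uniform $L^\infty$-bound of $(a(k\cdot))_k$ in the density step, since pointwise convergence of $a(k\cdot)$ fails in general. Once these steps are assembled one immediately recovers the full statement by reinserting the matrix indices.
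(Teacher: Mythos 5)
Your proof is correct and complete: the reduction to scalar entries, the uniform bound $\|a(k\cdot)\|_{L^\infty}=\|a\|_{L^\infty}$ combined with density of step functions over rational cubes in $L^1(\mathbb{R}^n)$, and the change of variables with the $O(k^{n-1})$ boundary-layer estimate together give exactly the asserted $*$-weak convergence. The paper does not prove this statement at all --- it quotes it as a well-known result from Cioranescu--Donato --- and your argument is precisely the standard proof given there, so there is nothing to compare beyond noting that your write-up fills in a proof the paper deliberately omits.
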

\begin{rem}
For any bounded measurable function $a\colon\mathbb{R}^{n}\to\mathbb{C}^{m\times m}$
one can associate the corresponding multiplication operator in $L^{2}(\mathbb{R}^{n})^{m}$.
Hence, Theorem \ref{thm:per} states the fact that in case of periodic
$a$ the sequence of associated multiplication operators of $a(k\cdot)$
converges in the weak operator topology to the operator of multiplying
with the respective integral mean. Indeed, this follows easily from
$L^{2}(\mathbb{R}^{n})\cdot L^{2}(\mathbb{R}^{n})=L^{1}(\mathbb{R}^{n})$.
See also \cite[8.10]{Conway}.\end{rem}
\begin{example}
\label{Ex:ODE_indep} Let $H=L^{2}\left(\mathbb{R}^{n}\right)^{m}$
and let $a,b\colon\mathbb{R}^{n}\to\mathbb{C}^{m\times m}$ be bounded,
measurable and $]0,1[^{n}$-periodic. We assume $\Re a(x)\geqq c$
for all $x\in\mathbb{R}^{n}$. Observe that any polynomial in $a$
and $b$ is $]0,1[^{n}$-periodic and so is $a^{-1}\coloneqq\left(x\mapsto a(x)^{-1}\right).$
Thus, by Theorem \ref{thm:time-independent}, we deduce that {\footnotesize
\begin{align*}
 & \partial_{0}a(k\cdot)+b(k\cdot)\\
 & \stackrel{G}{\longrightarrow}\partial_{0}\left(\int_{[0,1]^{n}}a(y)^{-1}dy\right)^{-1}\\
 & \quad\quad+\partial_{0}\sum_{j=1}^{\infty}\left(-\sum_{\ell=1}^{\infty}\left(\int_{[0,1]^{n}}a(y)^{-1}dy\right)^{-1}\int_{[0,1]^{n}}a(y)^{-1}\left(b(y)a(y)^{-1}\right)^{\ell}dy\left(-\partial_{0}^{-1}\right)^{\ell}\right)^{j}\left(\int_{[0,1]^{n}}a(y)^{-1}dy\right)^{-1},
\end{align*}
} as $k\to\infty$ in $L_{\nu}^{2}(\mathbb{R};H)$. \end{example}
\begin{rem}
In \cite[Theorem 1.2]{Tartar_Mem} or \cite{TartarNonlHom} the author
considers the equation $\left(\partial_{0}+b_{k}(\cdot)\right)u_{k}=f$
with $(b_{k})_{k}$ being a $[\alpha,\beta]$-valued (for some $\alpha,\beta\in\mathbb{R})$
sequence of bounded, measurable mappings depending on one spatial
variable. Also in that exposition a memory effect is derived. However,
the method uses the concept of Young measures. The reason for that
is the representation of the solution being a function of the oscillating
coefficent. More precisely, the convergence of the sequence $\left(e^{tb(k\cdot)}\right)_{k}$
is addressed. In order to let $k$ tend to infinity in this expression
one needs a result on the (weak-$*$) convergence of (continuous)
functions of bounded functions. This is where the Young-measures come
into play, see e.g.\ \cite[Section 2]{Ball} and the references therein
or \cite[p. 930]{TartarNonlHom}. The result used is the following.
There exists a family of probabilty measures $\left(\nu_{x}\right)_{x}$
supported on $[\alpha,\beta]$ such that for (a subsequence of) $(k)_{k}$
and all real continuous functions $G$ we have
\[
G\circ b_{k}(\cdot)\to\left(\mathbb{R}\ni x\mapsto\int_{[\alpha,\beta]}G(\lambda)d\nu_{x}(\lambda)\right)
\]
as $k\to\infty$ in $L^{\infty}\left(\mathbb{R}\right)$ $*$-weakly.
The family $\left(\nu_{x}\right)_{x}$ is also called the \emph{Young-measure
associated to $\left(b_{k}\right)_{k}$.} With the help of the family
$\left(\nu_{x}\right)_{x}$ a convolution kernel is computed such
that the respective limit equation can be written as 
\[
\partial_{0}u(t,x)+b^{0}(x)u-\int_{0}^{t}K(x,t-s)u(x,s)ds=f(x,t),
\]
where $b^{0}$ is a weak-$*$-limit of a subsequence of $(b_{k})_{k}$
and $K(x,t)=\int_{\mathbb{R}_{>0}}e^{-\lambda t}d\nu_{x}(\lambda)$
for a.e. $t\in\mathbb{R}_{>0}$ and $x\in\mathbb{R}$. The relation
to our above considerations is as follows. The resulting limit equation
within our approach can also be considered as an ordinary differential
equation perturbed by a convolution term. Denoting limits with respect
to the $\sigma(L_{\infty},L_{1})$-topology by $*$-$\lim$, we realize
that Theorem \ref{thm:time-independent} in this particular situation
states that the limit equation admits the form
\begin{align*}
 & \partial_{0}+\partial_{0}\sum_{k=1}^{\infty}\left(-\sum_{\ell=1}^{\infty}*\text{-}\lim_{k\to\infty}\left(b_{k}\right)^{\ell}\left(-\partial_{0}^{-1}\right)^{\ell}\right)^{k}\\
 & =\partial_{0}+b^{0}+\sum_{\ell=2}^{\infty}*\text{-}\lim_{k\to\infty}\left(b_{k}\right)^{\ell}\left(-\partial_{0}^{-1}\right)^{\ell-1}+\partial_{0}\sum_{k=2}^{\infty}\left(-\sum_{\ell=1}^{\infty}*\text{-}\lim_{k\to\infty}\left(b_{k}\right)^{\ell}\left(-\partial_{0}^{-1}\right)^{\ell}\right)^{k}
\end{align*}
as $k\to\infty$ in $L_{\nu}^{2}(\mathbb{R};L^{2}(\mathbb{R}))$.
Indeed, using \cite[6.2.6. Memory Problems, (b) p. 448]{Picard} or
\cite[Theorem 1.5.6 and Remark 1.5.7]{Waurick2011Diss}, we deduce
that the term 
\[
\sum_{\ell=2}^{\infty}*\text{-}\lim_{k\to\infty}\left(b_{k}\right)^{\ell}\left(-\partial_{0}^{-1}\right)^{\ell-1}+\partial_{0}\sum_{k=2}^{\infty}\left(-\sum_{\ell=1}^{\infty}*\text{-}\lim_{k\to\infty}\left(b_{k}\right)^{\ell}\left(-\partial_{0}^{-1}\right)^{\ell}\right)^{k}
\]
can be represented as a (temporal) convolution. Moreover, note that
the choice of subsequences is the same. Indeed, in the above rationale
with the Young measure approach, by a density argument, it suffices
to choose a subsequence of $(b_{k})_{k}$ such that any polynomial
of $(b_{k})_{k}$ converges $*$-weakly. This choice of subsequences
also suffices to deduce $G$-convergence of the respective equations
within the operator-theoretic perspective treated in this exposition.
\end{rem}
In the next example, we consider a partial differential equation,
which can be reformulated as ordinary differential equation in an
infinite-dimensional Hilbert space. More precisely, we treat Maxwell's
equations with the Drude-Born-Fedorov material model, see e.g. \cite{Picard-Freymond}.
In order to discuss this equation properly, we need to introduce several
operators from vector analysis.
\begin{defn}
Let $\Omega\subseteqq\mathbb{R}^{3}$ be open. Then we define%
\footnote{We denote by $C_{\infty,c}(\Omega)$ the set of arbitrarily often
differentiable functions with compact support in $\Omega$. %
} 
\begin{align*}
\curl_{c}\colon C_{\infty,c}(\Omega)^{3}\subseteqq L^{2}(\Omega)^{3} & \to L^{2}(\Omega)^{3}\\
\phi & \mapsto\left(\begin{array}{ccc}
0 & -\partial_{3} & \partial_{2}\\
\partial_{3} & 0 & -\partial_{1}\\
-\partial_{2} & \partial_{1} & 0
\end{array}\right)\phi,
\end{align*}
where we denote by $\partial_{i}$ the partial derivative with respect
to the $i$'th variable, $i\in\{1,2,3\}$. Moreover, introduce 
\begin{align*}
\dive_{c}\colon C_{\infty,c}(\Omega)^{3}\subseteqq L^{2}(\Omega)^{3} & \to L^{2}(\Omega)\\
\left(\phi_{1},\phi_{2},\phi_{3}\right) & \mapsto\sum_{i=1}^{3}\partial_{i}\phi_{i}.
\end{align*}
We define $\curl_{0}\coloneqq\overline{\curl_{c}}$, $\dive_{0}\coloneqq\overline{\dive_{c}}$.
The $0$ serves as a reminder for (the generalization of) the electric
and the Neumann boundary condition, respectively. If $\Omega$ is
simply connected, we also introduce 
\begin{align*}
\curl_{\diamond}\colon D(\curl_{\diamond})\subseteqq L^{2}(\Omega)^{3} & \to L^{2}(\Omega)^{3}\\
\phi & \mapsto\curl\phi,
\end{align*}
where $D(\curl_{\diamond})\coloneqq\left\{ \phi\in D(\curl);\curl\phi\in D(\dive_{0})\right\} .$\end{defn}
\begin{rem}
\label{curl-selfadjoint}It can be shown that if $\Omega$ is simply
connected with finite measure, then $\curl_{\diamond}$ is a selfadjoint
operator, see \cite{Filonov,Picard-Freymond}. In that reference it
is also stated that $\curl_{\diamond}$ has, except $0$, only discrete
spectrum. In particular, this means that the intersection of the resolvent
set of $\curl_{\diamond}$ with $\mathbb{R}$ is non-empty. For other
geometric properties of $\Omega$ resulting in the selfadjointness
of $\curl_{\diamond}$, we refer to \cite{Picard_curl_ext}.
\end{rem}
We now treat a homogenization problem of the Drude-Born-Fedorov model
as treated in \cite{Picard-Freymond}.
\begin{example}
\label{Drude-Born-Fedorov-Ex} Assume that $\Omega\subseteqq\mathbb{R}^{3}$
is open, simply connected and has bounded Lebesgue measure. Invoking
Remark \ref{curl-selfadjoint} and following \cite[Theorem 2.1]{Picard-Freymond},
the equation
\begin{equation}
\left(\partial_{0}\left(1+\eta\curl_{\diamond}\right)\left(\begin{array}{cc}
\epsilon & 0\\
0 & \mu
\end{array}\right)+\left(\begin{array}{cc}
0 & -\curl_{\diamond}\\
\curl_{\diamond} & 0
\end{array}\right)\right)\left(\begin{array}{c}
E\\
H
\end{array}\right)=\left(\begin{array}{c}
J\\
0
\end{array}\right)\label{eq:DBF}
\end{equation}
for $\eta\in\mathbb{R}$ such that $-\frac{1}{\eta}\in\rho(\curl_{\diamond})$,
$J\in L_{\nu}^{2}(\mathbb{R};L^{2}(\Omega)^{3})$ and given $\epsilon,\mu\in L(L^{2}(\Omega)^{3})$
being strictly positive selfadjoint operators, admits a unique solution
$(E,H)\in H_{\nu,1}(\mathbb{R};L^{2}(\Omega)^{3}).$%
\footnote{Note that for $(E,H)\in H_{\nu,1}(\mathbb{R};L^{2}(\Omega)^{3})$
being a solution of (\ref{eq:DBF}) can only be true in the distributional
sense, which can be made more precise with the help of the extrapolation
spaces of $\curl_{\diamond}$. We shall, however, not follow this
reasoning here in more details and refer again to \cite{Picard-Freymond}
or \cite[Chapter 2]{Picard}.%
} Indeed, multiplying (\ref{eq:DBF}) by $\left(1+\eta\curl_{\diamond}\right)^{-1},$
we get that 
\[
\left(\partial_{0}\left(\begin{array}{cc}
\epsilon & 0\\
0 & \mu
\end{array}\right)+\curl_{\diamond}\left(1+\eta\curl_{\diamond}\right)^{-1}\left(\begin{array}{cc}
0 & -1\\
1 & 0
\end{array}\right)\right)\left(\begin{array}{c}
E\\
H
\end{array}\right)=\left(1+\eta\curl_{\diamond}\right)^{-1}\left(\begin{array}{c}
J\\
0
\end{array}\right).
\]
Realizing that \foreignlanguage{english}{$\curl_{\diamond}\left(1+\eta\curl_{\diamond}\right)^{-1}$}
is a \emph{bounded }linear operator by the spectral theorem for the
selfadjoint operator $\curl_{\diamond}$, we get that $(E,H)\in H_{\nu,1}(\mathbb{R};L^{2}(\Omega)^{3})$
solves the above equation. Note that the equation derived from (\ref{eq:DBF})
is a mere ordinary differential equation in an infinite-dimensional
Hilbert space. Assume we are given bounded sequences of selfadjoint
operators $\left(\epsilon_{n}\right)_{n}$ and $\left(\mu_{n}\right)_{n}$
satisfying $\epsilon_{n}\geqq c$ and $\mu_{n}\geqq c$ for some $c>0$
and all $n\in\mathbb{N}$. For $n\in\mathbb{N}$ we consider the problem
\[
\left(\partial_{0}\left(\begin{array}{cc}
\epsilon_{n} & 0\\
0 & \mu_{n}
\end{array}\right)+\curl_{\diamond}\left(1+\eta\curl_{\diamond}\right)^{-1}\left(\begin{array}{cc}
0 & -1\\
1 & 0
\end{array}\right)\right)\left(\begin{array}{c}
E_{n}\\
H_{n}
\end{array}\right)=\left(1+\eta\curl_{\diamond}\right)^{-1}\left(\begin{array}{c}
J\\
0
\end{array}\right)
\]
and address the question of $G$-convergence of (a subsequence of)
\[
\left(\text{DBF}_{n}\right)_{n}\coloneqq\left(\partial_{0}\left(\begin{array}{cc}
\epsilon_{n} & 0\\
0 & \mu_{n}
\end{array}\right)+\curl_{\diamond}\left(1+\eta\curl_{\diamond}\right)^{-1}\left(\begin{array}{cc}
0 & -1\\
1 & 0
\end{array}\right)\right)_{n}.
\]
Clearly, Theorem \ref{thm:time-independent} applies and we get that
(a subsequence of) $\left(\text{DBF}_{n}\right)_{n}$ $G$-converges
to 
\[
\partial_{0}M_{hom,0}^{-1}+\partial_{0}\sum_{k=1}^{\infty}\left(-\sum_{\ell=1}^{\infty}M_{hom,0}^{-1}M_{hom,\ell}\left(-\partial_{0}^{-1}\right)^{\ell}\right)^{k}M_{hom,0}^{-1},
\]
as $k\to\infty$ in $L_{\nu}^{2}(\mathbb{R};H)$, where 
\[
M_{hom,0}=\tau_{\textnormal{w}}\textnormal{-}\lim_{k\to\infty}\left(\begin{array}{cc}
\epsilon_{n_{k}}^{-1} & 0\\
0 & \mu_{n_{k}}^{-1}
\end{array}\right)
\]
 and 
\[
M_{hom,\ell}=\tau_{\textnormal{w}}\textnormal{-}\lim_{k\to\infty}\left(\begin{array}{cc}
\epsilon_{n_{k}}^{-1} & 0\\
0 & \mu_{n_{k}}^{-1}
\end{array}\right)\left(\curl_{\diamond}\left(1+\eta\curl_{\diamond}\right)^{-1}\left(\begin{array}{cc}
0 & -\mu_{n_{k}}^{-1}\\
\epsilon_{n_{k}}^{-1} & 0
\end{array}\right)\right)^{\ell}.
\]

\end{example}
We have seen that the class of problems discussed in Theorem \ref{thm:time-independent}
in this section is \emph{not }closed under the $G$-convergence, unless
$N=0$.

\section{Time-translation invariant coeffcients\label{sec:Time-translation-invariant-coeff}}

In Theorem \ref{thm:time-independent}, we have seen that the limit
equation can be described as a power series expression in $\partial_{0}^{-1}$.
A possible way to generalize this is the introduction of holomorphic
functions in $\partial_{0}^{-1}$, see \cite[Section 6.1, page 427]{Picard}.
To make this precise, we need the spectral representation for $\partial_{0}^{-1}$,
the Fourier-Laplace transform $\mathcal{L}_{\nu},$ which is given
as the unitary operator being the closure of 
\begin{align*}
C_{\infty,c}(\mathbb{R})\subseteqq L_{\nu}^{2}(\mathbb{R}) & \to L_{\nu}^{2}(\mathbb{R})\\
\phi & \mapsto\left(x\mapsto\frac{1}{\sqrt{2\pi}}\int_{\mathbb{R}}e^{-ixy-\nu y}\phi(y)dy\right).
\end{align*}
Denoting by $m\colon D(m)\subseteqq L^{2}(\mathbb{R})\to L^{2}(\mathbb{R}),f\mapsto\left(x\mapsto xf(x)\right)$
the multiplication-by-argument-operator with maximal domain $D(m)$,
we arrive at the representation
\[
\partial_{0}^{-1}=\mathcal{L}_{\nu}^{*}\left(\frac{1}{im+\nu}\right)\mathcal{L}_{\nu}.
\]
Thus, for bounded and analytic functions $M\colon B(r,r)\to\mathbb{C}$
with $r>\frac{1}{2\nu}$ we define
\[
M\left(\partial_{0}^{-1}\right)\coloneqq\mathcal{L}_{\nu}^{*}M\left(\frac{1}{im+\nu}\right)\mathcal{L}_{\nu},
\]
where $\left(M\left(\frac{1}{im+\nu}\right)\phi\right)(t)\coloneqq M\left(\frac{1}{it+\nu}\right)\phi(t)$
for $\phi\in L^{2}(\mathbb{R})$ and a.e.\  $t\in\mathbb{R}$. We
canonically extend the above definitions to the case of vector-valued
functions $L_{\nu}^{2}(\mathbb{R};H)$ with values in a Hilbert space
$H$ . In this way, the definition of $M\left(\partial_{0}^{-1}\right)$
can be generalized to bounded and operator-valued functions $M:B(r,r)\to L(H_{0},H_{1})$
for Hilbert spaces $H_{0}$ and $H_{1}$. We denote
\[
\mathcal{H}^{\infty}(B(r,r);L(H_{0},H_{1}))\coloneqq\left\{ M:B(r,r)\to L(H_{0},H_{1});M\text{ bounded, analytic}\right\} .
\]
A subset $\mathfrak{M}\subseteqq\mathcal{H}^{\infty}(B(r,r);L(H_{0},H_{1}))$
is called \emph{bounded}, if 
\[
\sup\{\|M(z)\|;z\in B(r,r),M\in\mathfrak{M}\}<\infty.
\]
A family $(M_{\iota})_{\iota\in I}$ in $\mathcal{H}^{\infty}(B(r,r);L(H_{0},H_{1}))$
is \emph{bounded},\emph{ }if $\left\{ M_{\iota};\iota\in I\right\} $
is bounded. 

We will treat some examples for $\mathcal{H}^{\infty}$-functions
of $\partial_{0}^{-1}$ below, see also \cite{WaurickMMA}. In this
reference, a homogenization theorem of problems of the kind treated
in Theorem \ref{thm:first-hom_thm} with $\left(\mathcal{M}_{n}\right)_{n}=\left(M_{n}\left(\partial_{0}^{-1}\right)\right)_{n}$
for a bounded sequence $\left(M_{n}\right)_{n}$ in $\mathcal{H}^{\infty}$
has been presented, see \cite[Theorem 5.2]{WaurickMMA}. Moreover,
in \cite[Theorem 4.4]{Waurick2012aDIE} a special case of an analogous
result of Theorem \ref{thm:second-hom_thm} has been presented and
used. In order to state a $G$-convergence theorem in a more general
situation, note that 
\[
\left\{ M\left(\partial_{0}^{-1}\right);M\in\mathcal{H}^{\infty}(B(r,r);L(H_{0},H_{1}))\right\} \subseteqq\bigcap_{\frac{1}{2r}<\nu}L_{\text{ev},\nu}(H_{0},H_{1}).
\]
The theorem reads as follows.
\begin{thm}
\label{thm:translation-inv} Let $H_{0},H_{1}$ be separable Hilbert
spaces, $\nu_{0}>0$, $r>\frac{1}{2\nu_{0}}$. Let $\left(M_{n}\right)_{n},\left(N_{n}^{ij}\right)_{n}$
be bounded sequences in $\mathcal{H}^{\infty}(B(r,r);L(H_{0}))$ and
$\mathcal{H}^{\infty}(B(r,r);L(H_{j},H_{i}))$, respectively ($i,j\in\{0,1\}$).
Assume there exists $c>0$ such that for all $n\in\mathbb{N}$ we
have for all $(\phi,\psi)\in H_{0}\oplus H_{1}$ and $z\in B(r,r)$
\[
\Re\langle M_{n}(z)\phi,\phi\rangle_{H_{0}}\geqq c\left|\phi\right|_{H_{0}}^{2},\quad\Re\langle N_{n}^{11}(z)\psi,\psi\rangle_{H_{1}}\geqq c\left|\psi\right|_{H_{1}}^{2}.
\]
Then there exists $\nu>\nu_{0}$ and a subsequence $(n_{k})_{k}$
of $(n)_{n}$ such that {\footnotesize 
\begin{align*}
 & \partial_{0}\left(\begin{array}{cc}
M_{n_{k}}\left(\partial_{0}^{-1}\right) & 0\\
0 & 0
\end{array}\right)+\left(\begin{array}{cc}
N_{n_{k}}^{00}\left(\partial_{0}^{-1}\right) & N_{n_{k}}^{01}\left(\partial_{0}^{-1}\right)\\
N_{n_{k}}^{10}\left(\partial_{0}^{-1}\right) & N_{n_{k}}^{11}\left(\partial_{0}^{-1}\right)
\end{array}\right)\\
 & \stackrel{G}{\longrightarrow}\left(\begin{array}{cc}
\partial_{0} & 0\\
0 & 1
\end{array}\right)\left(\left(\begin{array}{cc}
M_{hom,0,00}\left(\partial_{0}^{-1}\right)^{-1} & 0\\
0 & N_{hom,-1,11}\left(\partial_{0}^{-1}\right)^{-1}
\end{array}\right)^{\phantom{\ell}}\right.\\
 & \left.+\sum_{\ell=1}^{\infty}\left(-\left(\begin{array}{cc}
M_{hom,0,00}\left(\partial_{0}^{-1}\right)^{-1} & 0\\
0 & N_{hom,-1,11}\left(\partial_{0}^{-1}\right)^{-1}
\end{array}\right)M^{(1)}\left(\partial_{0}^{-1}\right)\right)^{\ell}\left(\begin{array}{cc}
M_{hom,0,00}\left(\partial_{0}^{-1}\right)^{-1} & 0\\
0 & N_{hom,-1,11}\left(\partial_{0}^{-1}\right)^{-1}
\end{array}\right)\right),
\end{align*}
}where we put 
\[
N_{n}\coloneqq N_{n}^{00}-N_{n}^{01}\left(N_{n}^{11}\right)^{-1}N_{n}^{10}\quad(n\in\mathbb{N})
\]
as well as 
\[
M^{(1)}\left(\partial_{0}^{-1}\right)\coloneqq\left(\begin{array}{cc}
\sum_{\ell=1}^{\infty}M_{hom,\ell,00}\left(\partial_{0}^{-1}\right)\left(\partial_{0}^{-1}\right)^{\ell} & \sum_{\ell=0}^{\infty}M_{hom,\ell,01}\left(\partial_{0}^{-1}\right)\left(\partial_{0}^{-1}\right)^{\ell+1}\\
\sum_{\ell=0}^{\infty}M_{hom,\ell,10}\left(\partial_{0}^{-1}\right)\left(\partial_{0}^{-1}\right)^{\ell} & \sum_{\ell=0}^{\infty}M_{hom,\ell,11}\left(\partial_{0}^{-1}\right)\left(\partial_{0}^{-1}\right)^{\ell+1}
\end{array}\right)
\]
and 
\begin{align*}
M_{hom,\ell,00}(z) & =\tau_{\textnormal{w}}\text{-}\lim_{k\to\infty}M_{n_{k}}(z)^{-1}\left(-N_{n_{k}}(z)M_{n_{k}}(z)^{-1}\right)^{\ell},\\
M_{hom,\ell,01}(z) & =\tau_{\textnormal{w}}\text{-}\lim_{k\to\infty}-M_{n_{k}}(z)^{-1}\left(-N_{n_{k}}(z)M_{n_{k}}(z)^{-1}\right)^{\ell}N_{n_{k}}^{01}(z)\left(N_{n_{k}}^{11}(z)\right)^{-1},\\
M_{hom,\ell,10}(z) & =\tau_{\textnormal{w}}\text{-}\lim_{k\to\infty}-\left(N_{n_{k}}^{11}(z)\right)^{-1}N_{n_{k}}^{10}(z)M_{n_{k}}(z)^{-1}\left(-N_{n_{k}}(z)M_{n_{k}}(z)^{-1}\right)^{\ell},\\
M_{hom,\ell,11}(z) & =\tau_{\textnormal{w}}\text{-}\lim_{k\to\infty}\left(N_{n_{k}}^{11}(z)\right)^{-1}N_{n_{k}}^{10}(z)M_{n_{k}}(z)^{-1}\left(-N_{n_{k}}(z)M_{n_{k}}(z)^{-1}\right)^{\ell}N_{n_{k}}^{01}(z)\left(N_{n_{k}}^{11}(z)\right)^{-1},\\
N_{hom,-1,11}(z) & =\tau_{\textnormal{w}}\text{-}\lim_{k\to\infty}\left(N_{n_{k}}^{11}(z)\right)^{-1},
\end{align*}
for all $z\in B\left(\frac{1}{2\nu_{1}},\frac{1}{2\nu_{1}}\right)$
for some $\nu>\nu_{1}\geqq\nu_{0}.$\end{thm}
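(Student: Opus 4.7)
The plan is to reduce the theorem to Theorem \ref{thm:second-hom_thm} by verifying that $\mathcal{M}_n \coloneqq M_n(\partial_0^{-1})$ and $\mathcal{N}_n^{ij} \coloneqq N_n^{ij}(\partial_0^{-1})$ fall within its scope, and then to identify the abstract weak-operator-topology limits produced by that theorem as the corresponding $\mathcal{H}^\infty$-functions of $\partial_0^{-1}$. First, I would check that the inclusion
\[
\{M(\partial_{0}^{-1});M\in\mathcal{H}^{\infty}(B(r,r);L(H_{0},H_{1}))\}\subseteqq\bigcap_{\frac{1}{2r}<\nu}L_{\text{ev},\nu}(H_{0},H_{1})
\]
together with the uniform bound on $\|M_n(\cdot)\|_\infty$ supplies the boundedness in $L_{\mathrm{ev},\nu_1}$ for any $\nu_1>\max\{\nu_0,1/(2r)\}$; the norm control follows directly from the spectral representation $M_n(\partial_0^{-1})=\mathcal{L}_\nu^{*}M_n(1/(im+\nu))\mathcal{L}_\nu$ and the unitarity of $\mathcal{L}_\nu$.

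Second, I would verify the positive-definiteness hypothesis of Theorem \ref{thm:second-hom_thm}. The key geometric observation is that for $\nu>1/(2r)$ the curve $t\mapsto 1/(it+\nu)$ lies inside $B(r,r)$, so the pointwise estimate $\Re\langle M_n(z)\phi,\phi\rangle_{H_0}\geqq c|\phi|^2_{H_0}$ combined with Plancherel for $\mathcal{L}_\nu$ yields $\Re\langle M_n(\partial_0^{-1})\phi,\phi\rangle_\nu\geqq c|\phi|_\nu^2$ for all $\phi\in L_\nu^2(\mathbb{R};H_0)$, and analogously for $N_n^{11}(\partial_0^{-1})$. Thus Theorem \ref{thm:second-hom_thm} applies and produces a subsequence along which $G$-convergence holds, with an abstract limit described through the operators $\mathcal{M}_{hom,\ell,ij}$ and $\mathcal{N}_{hom,-1,11}$.

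Third, I would identify these abstract limits as $\mathcal{H}^\infty$-functions of $\partial_0^{-1}$. Using the pointwise uniform boundedness and a Vitali/Montel-type argument applied to the scalar analytic functions $z\mapsto\langle F_n(z)\phi,\psi\rangle$ with $\phi,\psi$ ranging over countable dense subsets, I would extract a further subsequence so that for every $z\in B(r,r)$ the operator combinations appearing in the definitions of $M_{hom,\ell,ij}(z)$ and $N_{hom,-1,11}(z)$ converge in the weak operator topology to bounded analytic limits on $B(r,r)$. Pointwise weak convergence of such a uniformly bounded sequence $F_n(z)\to F(z)$ then transfers to weak operator topology convergence of $F_n(\partial_0^{-1})\to F(\partial_0^{-1})$ on $L_\nu^2(\mathbb{R};H)$: indeed, writing the sesquilinear form in the Fourier-Laplace picture, the integrand converges pointwise and is dominated by a fixed integrable majorant, so dominated convergence closes the argument. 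Matching this with the Neumann series manipulations already carried out in the proof of Theorem \ref{thm:second-hom_thm} (and using uniqueness of $G$-limits) yields the claimed representation, with the extra factors of $\partial_0^{-1}$ accounting for the shift induced by writing $N_n^{ij}(\partial_0^{-1})=\partial_0 \cdot \partial_0^{-1}N_n^{ij}(\partial_0^{-1})$ inside $\mathcal{M}^{(1)}$.

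The main obstacle will be the simultaneous extraction of subsequences so that all of the countably many operator-valued analytic families $M_{hom,\ell,ij}(\cdot)$, $N_{hom,-1,11}(\cdot)$ converge pointwise on $B(r,r)$ together with their inverses, products, and Neumann-series combinations; this is delicate because weak operator convergence does not commute with inversion or multiplication. The remedy is to define each $M_{hom,\ell,ij}(z)$ directly as the weak limit of the specific composite $M_n(z)^{-1}(-N_n(z)M_n(z)^{-1})^\ell\cdots$ at each $z$, rather than as a product of separate limits, and to exploit the uniform invertibility coming from the positive-definiteness hypothesis to ensure that these composites are uniformly bounded. Once this bookkeeping is controlled, a standard diagonal argument combined with Vitali's theorem ensures the limits are analytic in $z$ on $B(r,r)$ and gives the $\mathcal{H}^\infty$-functions whose $\partial_0^{-1}$-calculus realizes the abstract $G$-limit.
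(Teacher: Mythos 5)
Your proposal is correct and follows essentially the same route as the paper: reduce to Theorem \ref{thm:second-hom_thm} and then show the resulting weak-operator limits are again $\mathcal{H}^{\infty}$-functions of $\partial_{0}^{-1}$. The two key facts you sketch --- sequential compactness of bounded subsets of $\mathcal{H}^{\infty}(B(r,r);L(H_{0},H_{1}))$ in the topology of compact-open convergence of the scalarized analytic functions (your Vitali/Montel plus diagonal argument), and the transfer of that convergence to weak operator convergence of $T_{n}(\partial_{0}^{-1})$ via dominated convergence in the Fourier--Laplace picture --- are exactly the two lemmas the paper invokes by citation from the references \cite{WaurickMMA,Waurick2013frac}.
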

\begin{proof}
Observe that bounded and analytic functions of $\partial_{0}^{-1}$
commute with $\partial_{0}^{-1}$. Note that the only thing left to
prove is that the operator-valued functions involved are indeed analytic
functions of $\partial_{0}^{-1}$. For this we need to introduce a
topology on $\mathcal{H}^{\infty}(B(r,r);L(H_{0},H_{1})).$ Let $\tau$
be the topology induced by the mappings
\begin{align*}
\mathcal{H}^{\infty}(B(r,r);L(H_{0},H_{1})) & \to\mathcal{H}(B(r,r))\\
M & \mapsto\langle\phi,M(\cdot)\psi\rangle,
\end{align*}
for all $(\phi,\psi)\in H_{1}\oplus H_{0},$ where $\mathcal{H}(B(r,r))$
is the set of analytic functions endowed with the compact open topology.
In \cite[Theorem 3.4]{WaurickMMA} or \cite[Theorem 4.3]{Waurick2013frac}
it is shown that closed and bounded subsets of $\mathcal{H}^{\infty}(B(r,r);L(H_{0},H_{1}))$
are sequentially compact with respect to the $\tau$-topology. %
{} Furthermore, by \cite[Lemma 3.5]{WaurickMMA}, we have that if a
bounded sequence $\left(T_{n}\right)_{n}$ in $\mathcal{H}^{\infty}(B(r,r);L(H_{0},H_{1}))$
converges in the $\tau$-topology then the operator sequence $\left(T_{n}\left(\partial_{0}^{-1}\right)\right)_{n}$
converges in the weak operator topology of $L\left(L_{\nu}^{2}(\mathbb{R};H_{0}\oplus H_{1})\right)$.
Putting all this together, we deduce that the assertion follows from
Theorem \ref{thm:second-hom_thm}.\end{proof}
\begin{rem}
(a) Theorem \ref{thm:translation-inv} asserts that the time-translation
invariant equations under consideration are closed under $G$-convergence.
Though the formulas may become a bit cluttered, in principle, an iterated
homogenization procedure is possible.

(b) In \cite[Theorem 4.4]{Waurick2012aDIE} operator-valued functions
that are analytic at $0$ were treated. This assumption can be lifted.
Indeed, we only require analyticity of the operator-valued functions
under consideration on the open ball $B(r,r)$ for some radius $r>0$
and do not assume that any of these functions have holomorphic extensions
to $0$.
\end{rem}
We give several examples. 
\begin{example}
\label{Ex:conv}Let $\nu_{0}>0$. In this example we treat integral
equations of convolution type. Let $\left(g_{n}\right)_{n}$ be a
bounded sequence in $L_{\nu_{0}}^{1}(\mathbb{R}_{>0})$ such that
there is $h\in L_{\nu_{0}}^{1}(\mathbb{R}_{>0})$ with $\|g(t)\|\leqq h(t)$
for all $n\in\mathbb{N}$ and a.e. $t\in\mathbb{R}.$ For $f\in C_{\infty,c}(\mathbb{R})$
consider the equation
\begin{equation}
u_{n}+g_{n}*u_{n}=f.\label{eq:Conv-eq}
\end{equation}
The latter equation fits into the scheme of Theorem \ref{thm:translation-inv}
for $H=\mathbb{C}$. Indeed, using that the Fourier transform $\mathcal{F}$
translates convolutions into multiplication, we get for any $g\in L_{\nu}^{1}(\mathbb{R}_{>0})$
and $u\in L_{\nu}^{2}(\mathbb{R})$ for some $\nu>\nu_{0}$ that 
\begin{align*}
g*u & =\sqrt{2\pi}\mathcal{L}_{\nu}^{*}\mathcal{L}_{\nu}g(m)\mathcal{L}_{\nu}u\\
 & =\sqrt{2\pi}\mathcal{L}_{\nu}^{*}\left(\mathcal{F}g\right)(m-i\nu)\mathcal{L}_{\nu}u\\
 & =\sqrt{2\pi}\mathcal{L}_{\nu}^{*}\left(\mathcal{F}g\right)\left(-i\frac{1}{\left(im+\nu\right)^{-1}}\right)\mathcal{L}_{\nu}u.
\end{align*}
The support and integrability condition of $g$ implies analyticity
of $M_{g}\coloneqq\sqrt{2\pi}\left(\mathcal{F}g\right)\left(-i\frac{1}{\left(\cdot\right)}\right)$
on $B(r,r)$ for $0<r<\frac{1}{2\nu_{0}}$. The computation also shows
that 
\begin{align*}
\left|g*u\right|_{\nu}^{2} & =\left|\sqrt{2\pi}\left(\mathcal{F}g\right)\left(-i\frac{1}{\left(im+\nu\right)^{-1}}\right)\mathcal{L}_{\nu}u\right|_{L^{2}}^{2}.\\
 & \leqq2\pi\left|\left(\mathcal{F}g\right)\left(-i\frac{1}{\left(i(\cdot)+\nu\right)^{-1}}\right)\right|_{\infty}^{2}\left|\mathcal{L}_{\nu}u\right|_{L^{2}}^{2}\\
 & \leqq2\pi\left|\left(\mathcal{F}g\right)\left((\cdot)-i\nu\right)\right|_{\infty}^{2}\left|u\right|_{\nu}^{2},
\end{align*}
where 
\begin{align*}
2\pi\left|\left(\mathcal{F}g\right)\left((\cdot)-i\nu\right)\right|_{\infty}^{2}\coloneqq\sup_{t\in\mathbb{R}}2\pi\left|\left(\mathcal{F}g\right)\left(t-i\nu\right)\right|^{2} & =\left|\int_{\mathbb{R}}e^{-i(t-i\nu)y}g(y)dy\right|^{2}\\
 & \leqq\left(\int_{\mathbb{R}}e^{-\nu y}\left|g(y)\right|dy\right)^{2},
\end{align*}
which tends to zero, if $\nu\to\infty.$ Thus, by our assumption on
the sequence $\left(g_{n}\right)_{n}$ having the uniform majorizing
function $h$, there exists $\nu_{1}>0$ such that we have 
\[
\epsilon\coloneqq\sup_{n\in\mathbb{N}}\|g_{n}*\|_{L(L_{\nu_{1}}^{2}(\mathbb{R}))}<1.
\]
Hence, we can reformulate (\ref{eq:Conv-eq}) as follows
\[
\left(1+M_{g_{n}}\left(\partial_{0}^{-1}\right)\right)u_{n}=f,
\]
Thus with $H_{0}=\{0\},$ $H_{1}=H$ and $N^{11}=\left(1+M_{g_{n}}\right)_{n}$
Theorem \ref{thm:translation-inv} is applicable. (Note that $\Re N_{n}^{11}\geqq1-\epsilon>0$
for all $n\in\mathbb{N}$). The assertion states that, for a suitable
subsequence for which we will use the same notation, we have 
\[
\left(1+M_{g_{n}}\left(\partial_{0}^{-1}\right)\right)\stackrel{G}{\longrightarrow}N_{hom,-1,11}\left(\partial_{0}^{-1}\right)^{-1}
\]
with 
\begin{align*}
N_{hom,-1,11}(z) & =\tau_{\textnormal{w}}\text{-}\lim_{n\to\infty}\left(1+M_{g_{n}}(z)\right)^{-1}\\
 & =\tau_{\textnormal{w}}\text{-}\lim_{n\to\infty}1+\sum_{\ell=1}^{\infty}M_{g_{n}}(z)^{\ell}\\
 & =\tau_{\textnormal{w}}\text{-}\lim_{n\to\infty}1+\sum_{\ell=1}^{\infty}M_{\left(g_{n}\right)^{*\ell}}(z)\\
 & =\tau_{\textnormal{w}}\text{-}\lim_{n\to\infty}1+M_{\sum_{\ell=1}^{\infty}\left(g_{n}\right)^{*\ell}}(z)
\end{align*}
for all $z\in B\left(\frac{1}{2\nu_{1}},\frac{1}{2\nu_{1}}\right)$
for some $\nu>\nu_{1}\geqq\nu_{0},$ where we denoted the $\ell$-fold
convolution with a function $g$ by $g^{*\ell}$, $\ell\in\mathbb{N}$.

\end{example}
In \cite{WauKal} we discussed the following variant of Example \ref{Ex:ODE_indep}.
\begin{example}
In the situation of Example \ref{Ex:ODE_indep}, we let $(h_{k})_{k}$
be a convergent sequence of positive real numbers with limit $h$.
Then Theorem \ref{thm:translation-inv} gives

{\footnotesize 
\begin{align*}
 & \partial_{0}a(k\cdot)+\tau_{-h_{k}}b(k\cdot)\\
 & \stackrel{G}{\longrightarrow}\partial_{0}\left(\int_{[0,1]^{n}}a(y)^{-1}dy\right)^{-1}\\
 & \quad\quad+\partial_{0}\sum_{k=1}^{\infty}\left(-\sum_{\ell=1}^{\infty}\left(\int_{[0,1]^{n}}a(y)^{-1}dy\right)^{-1}\int_{[0,1]^{n}}a(y)^{-1}\left(\tau_{-h}b(y)a(y)^{-1}\right)^{\ell}dy\left(-\partial_{0}^{-1}\right)^{\ell}\right)^{k}\left(\int_{[0,1]^{n}}a(y)^{-1}dy\right)^{-1}.
\end{align*}
}

Indeed, it suffices to observe that $\tau_{-h}=\mathcal{L}_{\nu}^{*}e^{-h(im+\nu)}\mathcal{L}_{\nu}$. 
\end{example}
Fractional differential equations are also admissible as the following
example shows.
\begin{example}
Again in the situation of Example \ref{Ex:ODE_indep}, let $\left(\alpha_{k}\right)_{k}$
and $\left(\beta_{k}\right)_{k}$ be convergent sequences in $]0,1]$
and $[-1,0]$ with limits $\alpha$ and $\beta$, resp. Then Theorem
\ref{thm:translation-inv} gives

{\footnotesize 
\begin{align*}
 & \partial_{0}^{\alpha_{k}}a(k\cdot)+\partial_{0}^{\beta_{k}}b(k\cdot)=\partial_{0}\partial_{0}^{\alpha_{k}-1}a(k\cdot)+\partial_{0}^{\beta_{k}}b(k\cdot)\\
 & \stackrel{G}{\longrightarrow}\partial_{0}^{\alpha}\left(\int_{[0,1]^{n}}a(y)^{-1}dy\right)^{-1}\\
 & \quad\quad+\partial_{0}^{\alpha}\sum_{k=1}^{\infty}\left(-\sum_{\ell=1}^{\infty}\partial_{0}^{\alpha-1}\left(\int_{[0,1]^{n}}a(y)^{-1}dy\right)^{-1}\int_{[0,1]^{n}}a(y)^{-1}\partial_{0}^{1-\alpha}\left(\partial_{0}^{1+\beta-\alpha}b(y)a(y)^{-1}\right)^{\ell}dy\left(-\partial_{0}^{-1}\right)^{\ell}\right)^{k}\\
 & \quad\quad\cdot\left(\int_{[0,1]^{n}}a(y)^{-1}dy\right)^{-1}.
\end{align*}
}\end{example}
\begin{rem}
Note that all the above theorems on homogenization of differential
equations straightforwardly apply to higher order equations. For example
the equation
\[
\sum_{k=0}^{n}\partial_{0}^{k}a_{k}u=f
\]
 can be reformulated as a first order system in the standard way.
Another way is to integrate $n-1$ times, to get that
\[
\sum_{k=0}^{n}\partial_{0}^{1+k-n}a_{k}u=\partial_{0}^{-(n-1)}f,
\]
 which is by setting $M(\partial_{0}^{-1})=a_{n}$ and $N(\partial_{0}^{-1})=\sum_{k=0}^{n-1}\partial_{0}^{1+k-n}a_{k}$
of the form treated in Theorem \ref{thm:translation-inv}.
\end{rem}

\section{Time-dependent coefficients\label{sec:Time-dependent-coefficients}}

In this section we treat operators depending on temporal and spatial
variables, which are, in contrast to the previous section, not time-translation
invariant. Thus, the structural hypothesis of being analytic functions
of $\partial_{0}^{-1}$ has to be lifted. Consequently, the expressions
for the limit equations do not simplify in the manner as they did
in the Theorems \ref{thm:time-independent} and \ref{thm:translation-inv}.
Particular ((non-)linear) equations have been considered in \cite{Mascarenhas1984,Tartar_Mem,Jiang,Jiang05,Petrini1998}.
The main objective of this section is to give a sufficient criterion
under which the choice of subsequences in Theorem \ref{thm:second-hom_thm}
is not required. We introduce the following notion.
\begin{defn}
Let $H$ be a Hilbert space. A family $((T_{n,\iota})_{n\in\mathbb{N}})_{\iota\in I}$
of sequences of linear operators in $L(H)$ is said to have the \emph{product-convergence
property}, if for all $k\in\mathbb{N}$ and $(\iota_{1},\ldots,\iota_{k})\in I^{k}$
the sequence $\left(\prod_{i=1}^{k}T_{n,\iota_{i}}\right)_{n}$ converges
in the weak operator topology of $L(H)$.\end{defn}
\begin{example}
\label{ex:per-mappings}Let $N,M\in\mathbb{N}$ and denote $\mathbb{P}\coloneqq\{a\colon\mathbb{R}^{N}\to\mathbb{C}^{M\times M};a\text{ is }[0,1]^{N}\text{-periodic}\}$.
Theorem \ref{thm:per} asserts that the family $\left(\left(a(k\cdot)\right)_{k\in\mathbb{N}}\right)_{a\in\mathbb{P}}$
has the product-convergence property in $L(L^{2}(\mathbb{R}^{N})^{M})$. 
\end{example}
We refer to the notion of homogenization algebras for other examples,
see e.g.\ \cite{Nguetseng1,Nguetseng2}. The main theorem of this
section reads as follows. Recall from Example \ref{time-dep} the
space $L_{s}^{\infty}(\mathbb{R};L(H))$ of strongly measurable bounded
functions with values in $L(H)$ endowed with the sup-norm. Moreover,
recall that for $A\in L_{s}^{\infty}(\mathbb{R};L(H))$ the associated
multiplication operator $A(m_{0})$ is evolutionary at $\nu$ for
every $\nu>0$.
\begin{thm}
\label{thm:product-convergence}Let $H$ be a Hilbert space, $\nu>0$.
Let $\left(\left(A_{\iota,n}\right)_{n}\right)_{\iota}$ be a family
of bounded sequences in $L_{s}^{\infty}(\mathbb{R};L(H))$. Assume
that the family $\left(\left(A_{\iota,n}(t)\right)_{n}\right)_{\iota,t\in\mathbb{R}}$
has the product-convergence property. Then $\left(\left(A_{\iota,n}(m_{0})\right)_{n},\left(\partial_{0}^{-1}\right)_{n}\right)_{\iota}$
has the product-convergence property.\end{thm}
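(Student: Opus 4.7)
The plan is to express an arbitrary product of operators drawn from $\{A_{\iota,n}(m_0) : \iota \in I\} \cup \{\partial_0^{-1}\}$ as an iterated integral, use the product-convergence property of the doubly-indexed family $((A_{\iota,n}(t))_n)_{(\iota,t) \in I \times \mathbb{R}}$ to obtain pointwise-in-times convergence of the integrand, and conclude by dominated convergence. Since $\|\partial_0^{-1}\|_{L(L_\nu^2)} \leq 1/\nu$ and the sequences $(A_{\iota,n})_n$ are uniformly bounded in $L_s^\infty(\mathbb{R};L(H))$, the family of such products is uniformly bounded in $L(L_\nu^2(\mathbb{R};H))$. Hence, by a density argument, it suffices to verify the convergence of $\langle g, T^{(n)} f\rangle_\nu$ for $f, g \in C_{\infty,c}(\mathbb{R};H)$, where $T^{(n)}$ denotes a generic such product.

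First I would reduce to the alternating case: adjacent multiplication operators combine into a single multiplication operator by pointwise composition, so any product may be written as
\[
T^{(n)} = M_0^{(n)}(m_0)\,\partial_0^{-1}\, M_1^{(n)}(m_0)\,\partial_0^{-1}\,\cdots\,\partial_0^{-1}\, M_r^{(n)}(m_0),
\]
where each $M_j^{(n)}\colon \mathbb{R}\to L(H)$ is a finite product $A_{\iota_{j,1},n}(\cdot)\cdots A_{\iota_{j,k_j},n}(\cdot)$ (with empty products, equal to the identity, allowed to accommodate consecutive or endpoint $\partial_0^{-1}$'s). Unfolding the integration kernel $(\partial_0^{-1}h)(t) = \int_{-\infty}^t h(s)\,ds$, the inner product becomes
\[
\langle g, T^{(n)} f \rangle_\nu = \int_{\mathbb{R}^{r+1}} \mathbf{1}_{\{t_r < \cdots < t_0\}} \bigl\langle g(t_0),\, M_0^{(n)}(t_0)\cdots M_r^{(n)}(t_r) f(t_r)\bigr\rangle\, e^{-2\nu t_0}\, dt_0\cdots dt_r.
\]

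Now comes the key step: for each fixed tuple $(t_0,\ldots,t_r)\in\mathbb{R}^{r+1}$, the operator $\prod_{j=0}^r M_j^{(n)}(t_j)$ is a product of factors selected from the doubly-indexed family $((A_{\iota,n}(t))_n)_{(\iota,t)\in I\times\mathbb{R}}$ at the prescribed index pairs $(\iota_{j,\ell}, t_j)$. By the product-convergence hypothesis, this product converges in the weak operator topology of $L(H)$ as $n\to\infty$, whence the scalar integrand converges pointwise on $\mathbb{R}^{r+1}$.

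To exchange limit and integral I would appeal to dominated convergence. Using the uniform bounds on the $A_{\iota,n}$'s, the integrand is majorized by $C\,\|g(t_0)\|\,\|f(t_r)\|\,e^{-2\nu t_0}\mathbf{1}_{\{t_r<\cdots<t_0\}}$; integrating out the $r-1$ intermediate variables over the simplex yields the factor $(t_0-t_r)^{r-1}/(r-1)!$, and the compact supports of $f$ and $g$ together with the exponential weight make the resulting two-dimensional integral finite. DCT then identifies the limit of $\langle g, T^{(n)}f\rangle_\nu$ with the same iterated integral written with the pointwise weak-operator limits, which is precisely the convergence required. The only genuine technicality is the combinatorial bookkeeping of the iterated-integral representation and ensuring that the product-convergence property is invoked against the correct $(I\times\mathbb{R})$-indexed family; the analytic content beyond that is just uniform-boundedness and dominated convergence.
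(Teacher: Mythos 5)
Your proposal is correct and follows essentially the same route as the paper: the paper proves the statement via Lemma \ref{lem:pointwise-integr}, which likewise unfolds each $\partial_{0}^{-1}$ into an iterated integral over a simplex, invokes the product-convergence property of $\left(\left(A_{\iota,n}(t)\right)_{n}\right)_{\iota,t}$ pointwise in the time variables, and concludes by dominated convergence together with uniform boundedness and a density argument. Your reduction to the alternating form with possibly empty multiplication blocks is just a repackaging of the paper's multiindex bookkeeping $\prod_{j}\left(A_{\iota_{j},n}(m_{0}),\partial_{0}^{-1}\right)^{\ell_{j}}$.
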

\begin{rem}
(a) With the latter result, it is possible to deduce that the choice
of subsequences in Theorem \ref{thm:second-hom_thm} is not needed.
Indeed, assume that 
\[
\left(\begin{array}{cc}
\mathcal{M}_{n} & 0\\
0 & 0
\end{array}\right)+\left(\begin{array}{cc}
\mathcal{N}_{n}^{00} & \mathcal{N}_{n}^{01}\\
\mathcal{N}_{n}^{10} & \mathcal{N}_{n}^{11}
\end{array}\right)=\left(\begin{array}{cc}
M_{n}(m_{0}) & 0\\
0 & 0
\end{array}\right)+\left(\begin{array}{cc}
N_{n}^{00}(m_{0}) & N_{n}^{01}(m_{0})\\
N_{n}^{10}(m_{0}) & N_{n}^{11}(m_{0})
\end{array}\right)
\]
for some strongly measurable and bounded $M_{n}^{1},N_{n}^{00},N_{n}^{01},N_{n}^{10},N_{n}^{11}$
and assume that the family
\begin{align*}
 & \left(\left(\begin{array}{cc}
M_{n}(t) & 0\\
0 & 0
\end{array}\right)_{n},\left(\begin{array}{cc}
M_{n}(t)^{-1} & 0\\
0 & 0
\end{array}\right)_{n},\left(\begin{array}{cc}
N_{n}^{00}(t) & 0\\
0 & 0
\end{array}\right)_{n}\right.,\\
 & \quad\left.\left(\begin{array}{cc}
0 & N_{n}^{01}(t)\\
0 & 0
\end{array}\right)_{n},\left(\begin{array}{cc}
0 & 0\\
N_{n}^{10}(t) & 0
\end{array}\right)_{n},\left(\begin{array}{cc}
0 & 0\\
0 & N_{n}^{11}(t)
\end{array}\right)_{n},\left(\begin{array}{cc}
0 & 0\\
0 & N_{n}^{11}(t)^{-1}
\end{array}\right)_{n}\right)_{t\in\mathbb{R}}
\end{align*}
satisfies the product-convergence property. Then Theorem \ref{thm:product-convergence}
ensures that the limit expressions in Theorem \ref{thm:second-hom_thm}
converge without choosing subsequences. 

(b) The crucial fact in Theorem \ref{thm:product-convergence} is
that powers of $\partial_{0}^{-1}$ are involved. Indeed, let $H$
be a Hilbert space, $\nu>0$. Let $\left(\left(A_{\iota,n}\right)_{n}\right)_{\iota}$
be a family of bounded sequences in $L_{s}^{\infty}(\mathbb{R};L(H))$.
Assume that, for every $t\in\mathbb{R}$, the family $\left(\left(A_{\iota,n}(t)\right)_{n}\right)_{\iota}$
has the product-convergence property. Then $\left(\left(A_{\iota,n}(m_{0})\right)_{n}\right)_{\iota}$has
the product-convergence property. Showing the assertion for two sequences
$\left(A_{1,n}\right)_{n}$ and $\left(A_{2,n}\right)_{n}$ and using
the boundedness of the sequence $\left(A_{1,n}(m_{0})A_{2,n}(m_{0})\right)_{n}$,
we deduce that it suffices to show weak convergence on a dense subset.
For this to show let $K,L\subseteqq\mathbb{R}$ be bounded and measurable
and $\phi,\psi\in H$. We get for $n\in\mathbb{N}$ and $\nu>0$ that
\begin{align*}
\left\langle \chi_{K}\phi,A_{1,n}(m_{0})A_{2,n}(m_{0})\chi_{L}\psi\right\rangle _{\nu} & =\int_{K\cap L}\langle\phi,A_{1,n}(t)A_{2,n}(t)\psi\rangle e^{-2\nu t}dt\\
 & \to\int_{K\cap L}\lim_{n\to\infty}\langle\phi,A_{1,n}(t)A_{2,n}(t)\psi\rangle e^{-2\nu t}dt,
\end{align*}
by dominated convergence.\end{rem}
\begin{lem}
\label{lem:pointwise-integr} Let $H$ be a Hilbert space, $\nu>0$.
Let $\left(\left(A_{\iota,n}\right)_{n}\right)_{\iota\in I}$ be a
family of bounded sequences in $L_{s}^{\infty}(\mathbb{R};L(H))$.
Assume that the family $\left(\left(A_{\iota,n}(t)\right)_{n}\right)_{\iota,t\in\mathbb{R}}$
has the product-convergence property. Then%
\footnote{In what follows we adopt multiindex notation: For two operators $A$,
$B$ and $k=(k_{1},k_{2})\in\mathbb{N}_{0}^{2}$ we denote $(A,B)^{k}\coloneqq A^{k_{1}}B^{k_{2}}.$
If $k_{j}$ is a multiindex in $\mathbb{N}_{0}^{2}$, we denote its
first and second component respectively by $k_{j,1}$ and $k_{j,2}$.%
} $\prod_{j=1}^{k}\left(A_{\iota_{j},n}(m_{0}),\partial_{0}^{-1}\right)^{\ell_{j}}$
converges in the weak operator toplogogy for all $k\in\mathbb{N}$,
$\ell_{1},\ldots,\ell_{k}\in\{0,1\}\times\mathbb{N}$ and $\iota_{1},\ldots,\iota_{k}\in I$.\end{lem}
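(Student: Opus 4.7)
The plan is to reduce the statement to pointwise (in time) convergence of products of operators in $L(H)$, and then invoke Lebesgue's dominated convergence theorem. The crucial tool is the explicit representation of $\partial_{0}^{-1}$ as a Volterra operator: on $L_{\nu}^{2}(\mathbb{R};H)$ one has $(\partial_{0}^{-1}f)(t)=\int_{-\infty}^{t}f(s)\,ds$, so $(\partial_{0}^{-1})^{b}$ is integration against the kernel $k_{b}(t,s)\coloneqq\frac{(t-s)^{b-1}}{(b-1)!}\chi_{\{s<t\}}$.

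Since each $\ell_{j,1}\in\{0,1\}$, the operator
$$T_{n}\coloneqq\prod_{j=1}^{k}\bigl(A_{\iota_{j},n}(m_{0}),\partial_{0}^{-1}\bigr)^{\ell_{j}}$$
is a composition of multiplication operators $A_{\iota_{j},n}(m_{0})$ (present precisely when $\ell_{j,1}=1$) interleaved with powers of $\partial_{0}^{-1}$. I would test $T_{n}$ against vectors of the form $g=\chi_{K_{0}}\phi$ and $f=\chi_{K_{1}}\psi$ with $K_{0},K_{1}\subseteq\mathbb{R}$ bounded measurable and $\phi,\psi\in H$; such vectors span a dense subspace of $L_{\nu}^{2}(\mathbb{R};H)$, and $(\|T_{n}\|_{L(L_{\nu}^{2})})_{n}$ is bounded by the norm bound $\|\partial_{0}^{-1}\|\leq 1/\nu$ and the assumed $L_{s}^{\infty}$-boundedness of $(A_{\iota,n})_{n}$. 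Using Fubini with the Volterra kernels -- justified by the local integrability of the $k_{b}$'s and the uniform essential bound on the $A_{\iota_{j},n}$ -- the matrix element $\langle g,T_{n}f\rangle_{\nu}$ can be rewritten as a finite-dimensional iterated integral of the form
$$\int_{\mathbb{R}^{r+1}}K(s_{0},\ldots,s_{r})\chi_{K_{0}}(s_{0})\chi_{K_{1}}(s_{r})\bigl\langle\phi,A_{\iota_{j_{1}},n}(s_{j_{1}})\cdots A_{\iota_{j_{m}},n}(s_{j_{m}})\psi\bigr\rangle_{H}\,ds_{0}\cdots ds_{r},$$
where $j_{1}<\cdots<j_{m}$ enumerates the indices with $\ell_{j,1}=1$ and $K$ is a non-negative kernel built from products of the $k_{b}$'s together with the exponential weight $e^{-2\nu s_{0}}$.

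For every fixed tuple $(s_{j_{1}},\ldots,s_{j_{m}})$, the product-convergence hypothesis for the family $\left(\left(A_{\iota,n}(t)\right)_{n}\right)_{\iota,t\in\mathbb{R}}$ applied to the pairs $(\iota_{j_{i}},s_{j_{i}})$ guarantees that the inner $H$-scalar product converges as $n\to\infty$. The uniform $L_{s}^{\infty}$-bound on $(A_{\iota,n})_{n}$ provides a constant majorant of this inner scalar product, and $K(s_{0},\ldots,s_{r})\chi_{K_{0}}(s_{0})\chi_{K_{1}}(s_{r})$ is integrable on $\mathbb{R}^{r+1}$, so dominated convergence yields convergence of $\langle g,T_{n}f\rangle_{\nu}$. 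Density of the test vectors together with uniform boundedness of $(T_{n})_{n}$ then upgrades this to convergence in the weak operator topology of $L(L_{\nu}^{2}(\mathbb{R};H))$. The only real obstacle is the bookkeeping involved in writing the iterated integral so that the product-convergence hypothesis can be invoked at a single tuple of times; once the Volterra representation is unfolded and the kernel identified, the rest of the argument is routine.
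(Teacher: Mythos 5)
Your proposal is correct and follows essentially the same route as the paper: test the operator products against vectors $\chi_{K}\phi$, $\chi_{L}\psi$, unfold the powers of $\partial_{0}^{-1}$ into iterated integrals (the paper writes nested simplex integrals where you collapse them into the Volterra kernel $\frac{(t-s)^{b-1}}{(b-1)!}\chi_{\{s<t\}}$, which is the same thing), apply the pointwise product-convergence hypothesis inside the integral, and conclude by dominated convergence together with uniform boundedness and density.
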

\begin{proof}
Let $k\in\mathbb{N}$, $\ell_{1},\ldots,\ell_{k}\in\{0,1\}\times\mathbb{N}$
and $\iota_{1},\ldots,\iota_{k}\in I$. Moreover, take $\phi,\psi\in H$
and $K,L\subseteqq\mathbb{R}$ be bounded and measurable. For $n\in\mathbb{N}$
and $\nu>0$ we compute
\begin{align*}
 & \left\langle \chi_{K}\phi,\prod_{j=1}^{k}\left(A_{\iota_{j},n}(m_{0}),\partial_{0}^{-1}\right)^{\ell_{j}}\chi_{L}\psi\right\rangle _{\nu}\\
 & =\int_{K}\left\langle \phi,A_{\iota_{1},n}(s_{0}^{0})^{\ell_{1,1}}\int_{-\infty}^{s_{0}^{0}}\int_{-\infty}^{s_{\ell_{1,2}-1}^{1}}\cdots\int_{-\infty}^{s_{1}^{1}}A_{\iota_{2},n}(s_{0}^{1})^{\ell_{2,1}}\int_{-\infty}^{s_{0}^{1}}\int_{-\infty}^{s_{\ell_{2,2}-1}^{2}}\cdots\int_{-\infty}^{s_{1}^{2}}\cdots\right.\\
 & \quad\left.A_{\iota_{k},n}(s_{0}^{k-1})^{\ell_{k,1}}\int_{-\infty}^{s_{0}^{k-1}}\int_{-\infty}^{s_{\ell_{k,2}-1}^{k}}\cdots\int_{-\infty}^{s_{1}^{k}}\chi_{L}(s_{0}^{k})\psi\right\rangle \\
 & \quad\quad ds_{0}^{k}\cdots ds_{\ell_{k,2}-2}^{k}ds_{\ell_{k,2}-1}^{k}\cdots ds_{0}^{2}\cdots ds_{\ell_{2,2}-2}^{2}ds_{\ell_{2,2}-1}^{2}ds_{0}^{1}\cdots ds_{\ell_{1,2}-2}^{1}ds_{\ell_{1,2}-1}^{1}e^{-2\nu s_{0}^{0}}ds_{0}^{0}\\
 & =\int_{K}\int_{-\infty}^{s_{0}^{0}}\int_{-\infty}^{s_{\ell_{1,2}-1}^{1}}\cdots\int_{-\infty}^{s_{1}^{1}}\int_{-\infty}^{s_{0}^{1}}\int_{-\infty}^{s_{\ell_{2,2}-1}^{2}}\cdots\int_{-\infty}^{s_{1}^{2}}\cdots\int_{-\infty}^{s_{0}^{k-1}}\int_{-\infty}^{s_{\ell_{k,2}-1}^{k}}\cdots\int_{-\infty}^{s_{1}^{k}}\\
 & \quad\quad\left\langle \phi,A_{\iota_{1},n}(t)^{\ell_{1,1}}A_{\iota_{2},n}(s_{0})^{\ell_{2,1}}\cdots\chi_{L}(s_{0}^{k})\psi\right\rangle \\
 & \quad\quad ds_{0}^{k}\cdots ds_{\ell_{k,2}-2}^{k}ds_{\ell_{k,2}-1}^{k}\cdots ds_{0}^{2}\cdots ds_{\ell_{2,2}-2}^{2}ds_{\ell_{2,2}-1}^{2}ds_{0}^{1}\cdots ds_{\ell_{1,2}-2}^{1}ds_{\ell_{1,2}-1}^{1}e^{-2\nu s_{0}^{0}}ds_{0}^{0}.
\end{align*}
Using dominated convergence, we deduce the convergence of the latter
expression. 
\end{proof}
~
\begin{proof}[Proof of Theorem \ref{thm:product-convergence}]
 The proof follows easily with Lemma \ref{lem:pointwise-integr}.
\end{proof}
Theorem \ref{thm:product-convergence} serves as a possibility to
deduce $G$-convergence of differential operators, where the coefficients
take values in, for example, periodic mappings as in Example \ref{ex:per-mappings}.
Another instance is given in the following example.
\begin{example}
\label{periodic-in-time}Let $A,B\in L^{\infty}(\mathbb{R})$ be $1$-periodic,
$f\in C_{\infty,c}(\mathbb{R})$. Assume that $A\geqq c$ for some
$c>0$. For $n\in\mathbb{N}$ and $\nu>0$ consider 
\[
\left(\partial_{0}A(n\cdot m_{0})+B(n\cdot m_{0})\right)u_{n}=f.
\]
Recall that from Theorem \ref{thm:first-hom_thm}, in order to compute
the limit equation, we have to compute expressions of the form 
\[
\mathcal{M}_{hom,\ell}=\tau_{\textnormal{w}}\textnormal{-}\lim_{n\to\infty}\mathcal{M}_{n}^{-1}\left(-\partial_{0}^{-1}\mathcal{N}_{n}\mathcal{M}_{n}^{-1}\right)^{\ell},
\]
where $\mathcal{M}_{n}=A(n\cdot m_{0})$ and $\mathcal{N}_{n}=B(n\cdot m_{0})$,
$\ell\in\mathbb{N}$.
\end{example}
In order to deduce $G$-convergence in the latter example we need
the following theorem.
\begin{thm}
\label{thm:Time-product}Let $A_{1},\ldots,A_{k}\in L^{\infty}(\mathbb{R})$
be $1$-periodic. Then for every $\nu>0$ we have 
\[
\mathcal{A}_{n}\coloneqq A_{1}(n\cdot m_{0})\left(\prod_{j=1}^{k-1}\partial_{0}^{-1}A_{j+1}(n\cdot m_{0})\right)\stackrel{\tau_{\textnormal{w}},n\to\infty}{-\!\!\!-\!\!\!-\!\!\!\!\longrightarrow}\left(\partial_{0}^{-1}\right)^{k-1}\prod_{j=1}^{k}\int_{0}^{1}A_{j}(y)dy\in L\left(L_{\nu}^{2}(\mathbb{R})\right).
\]
\end{thm}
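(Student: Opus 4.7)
My plan is to reduce the claim to a single weak-$*$ convergence in $L^\infty(\mathbb{R}^k)$ by unfolding $\mathcal{A}_n$ as an iterated Volterra integral and recognizing the oscillating prefactor as a genuine tensor product on $\mathbb{R}^k$, to which Theorem~\ref{thm:per} applies in dimension $k$. Since $\|\mathcal{A}_n\|_{L(L_\nu^2(\mathbb{R}))} \leq \nu^{-(k-1)} \prod_{j=1}^k \|A_j\|_\infty$ uniformly in $n$, a density argument shows that it suffices to establish convergence of $\langle \phi, \mathcal{A}_n \psi \rangle_\nu$ for $\phi,\psi$ in the dense subspace $C_{\infty,c}(\mathbb{R}) \subseteq L_\nu^2(\mathbb{R})$. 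Using that $\partial_0^{-1}$ acts as the causal integral $f \mapsto \int_{-\infty}^{\cdot} f$ and that Fubini's theorem applies (the integrand is bounded with compact support), iterated unfolding yields
\[
\langle \phi, \mathcal{A}_n \psi \rangle_\nu = \int_{\Delta_k} \overline{\phi(t_1)}\,\psi(t_k)\,e^{-2\nu t_1}\,\prod_{j=1}^k A_j(n t_j)\, dt_1 \cdots dt_k,
\]
where $\Delta_k := \{(t_1, \ldots, t_k) \in \mathbb{R}^k : t_1 \geq t_2 \geq \cdots \geq t_k\}$. Setting $F(t) := \overline{\phi(t_1)}\,\psi(t_k)\,e^{-2\nu t_1}\,\chi_{\Delta_k}(t)$, one has $F \in L^1(\mathbb{R}^k)$ since $F$ is bounded with compact support.

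Next I would define $G \colon \mathbb{R}^k \to \mathbb{C}$ by $G(s_1, \ldots, s_k) := \prod_{j=1}^k A_j(s_j)$. Since each $A_j$ is $1$-periodic on $\mathbb{R}$, the function $G$ is $]0,1[^k$-periodic, bounded, and measurable on $\mathbb{R}^k$. Theorem~\ref{thm:per} applied in dimension $k$ gives that $G(n\,\cdot)$ converges in $L^\infty(\mathbb{R}^k)$ weak-$*$ to the mean value
\[
\int_{[0,1]^k} G(y)\, dy = \prod_{j=1}^k \int_0^1 A_j(y)\, dy.
\]
Testing against $F \in L^1(\mathbb{R}^k)$ and observing that $\int_{\mathbb{R}^k} F(t)\, dt = \langle \phi, \partial_0^{-(k-1)} \psi \rangle_\nu$ by reversing the Fubini step, one concludes
\[
\langle \phi, \mathcal{A}_n \psi \rangle_\nu \longrightarrow \left( \prod_{j=1}^k \int_0^1 A_j(y)\, dy \right) \langle \phi, \partial_0^{-(k-1)} \psi \rangle_\nu,
\]
which is exactly the weak operator topology convergence asserted.

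No single step is a substantial obstacle; the main conceptual point is the realization that the separating $\partial_0^{-1}$'s decouple the oscillating variables by placing them in distinct coordinates of $\mathbb{R}^k$, so that the product $\prod_j A_j(n t_j)$ lifts to a genuine tensor product on $\mathbb{R}^k$ rather than a pointwise product on $\mathbb{R}$. Without this lifting, a direct application of Theorem~\ref{thm:per} on $\mathbb{R}$ would produce the incorrect limit $\int_0^1 \prod_j A_j(y)\, dy$ in place of the correct product of means $\prod_j \int_0^1 A_j(y)\, dy$, and it is precisely the temporal separation afforded by the integrations that makes the difference.
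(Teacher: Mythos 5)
Your proposal is correct and follows essentially the same route as the paper's proof: unfold the operator as an iterated Volterra integral over the simplex, recognize the oscillating factor $\prod_{j}A_{j}(nt_{j})$ as a $]0,1[^{k}$-periodic function on $\mathbb{R}^{k}$, apply Theorem \ref{thm:per} in dimension $k$ against an $L^{1}(\mathbb{R}^{k})$ weight, and finish by density using the uniform operator-norm bound. The only cosmetic difference is that the paper tests against characteristic functions of bounded measurable sets rather than $C_{\infty,c}$ functions.
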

\begin{proof}
For $n\in\mathbb{N}$ and $K,L\subseteqq\mathbb{R}$ bounded, measurable
we compute
\begin{align*}
\langle\chi_{K},\mathcal{A}_{n}\chi_{L}\rangle_{\nu} & =\int_{K}A_{1}(nt_{1})\int_{-\infty}^{t_{1}}A_{2}(nt_{2})\int_{-\infty}^{t_{2}}\cdots\int_{-\infty}^{t_{k-1}}A_{k}(nt_{k})\chi_{L}(t_{k})dt_{k}\cdots dt_{2}e^{-2\nu t_{1}}dt_{1}\\
 & =\int_{K}\int_{-\infty}^{t_{1}}\int_{-\infty}^{t_{2}}\cdots\int_{-\infty}^{t_{k-1}}\left(\prod_{j=1}^{k}A_{j}(nt_{j})\right)\chi_{L}(t_{k})e^{-2\nu t_{1}}dt_{k}\cdots dt_{1}\\
 & =\underbrace{\int_{\mathbb{R}}\cdots\int_{\mathbb{R}}}_{k\text{-times}}\left(\prod_{j=1}^{k}A_{j}(nt_{j})\right)\chi_{K}(t_{1})\left(\prod_{j=2}^{k}\chi_{\mathbb{R}_{>0}}(t_{j-1}-t_{j})\right)\chi_{L}(t_{k})e^{-2\nu t_{1}}dt_{k}\cdots dt_{1}.
\end{align*}
Now, observe that $(t_{1},\cdots,t_{k})\mapsto\chi_{K}(t_{1})\left(\prod_{j=2}^{k}\chi_{\mathbb{R}_{>0}}(t_{j-1}-t_{j})\right)\chi_{L}(t_{k})e^{-2\nu t_{1}}\in L^{1}(\mathbb{R}^{k}).$
Moreover, the mapping $(t_{1},\cdots,t_{k})\mapsto\prod_{j=1}^{k}A_{j}(t_{j})$
is $[0,1]^{k}$-periodic. Thus, by Theorem \ref{thm:per}, we conclude
that 
\[
\langle\chi_{K},\mathcal{A}_{n}\chi_{L}\rangle_{\nu}\to\left\langle \chi_{K},\left(\partial_{0}^{-1}\right)^{k-1}\prod_{j=1}^{k}\int_{0}^{1}A_{j}(y)dy\chi_{L}\right\rangle 
\]
 as $n\to\infty$ for all $K,L\subseteqq\mathbb{R}$ bounded and measurable.
A density argument concludes the proof. \end{proof}
\begin{example}[Example \ref{periodic-in-time} continued]
 Thus, with the Theorems \ref{thm:Time-product} and \ref{thm:first-hom_thm},
we conclude that $\left(\partial_{0}A(n\cdot m_{0})+B(n\cdot m_{0})\right)$
$G$-converges to 
\begin{align*}
 & \partial_{0}\left(\int_{0}^{1}\frac{1}{A(y)}dy\right)^{-1}\\
 & \quad+\partial_{0}\sum_{k=1}^{\infty}\left(-\sum_{\ell=1}^{\infty}\left(\int_{0}^{1}\frac{1}{A(y)}dy\right)^{-1}\int_{0}^{1}\frac{1}{A(y)}dy\left(-\partial_{0}^{-1}\int_{0}^{1}\frac{B(y)}{A(y)}dy\right)^{\ell}\right)^{k}\left(\int_{0}^{1}\frac{1}{A(y)}dy\right)^{-1}\\
 & =\partial_{0}\left(\int_{0}^{1}\frac{1}{A(y)}dy\right)^{-1}\left(1+\sum_{k=1}^{\infty}\left(-\sum_{\ell=1}^{\infty}\left(-\partial_{0}^{-1}\int_{0}^{1}\frac{B(y)}{A(y)}dy\right)^{\ell}\right)^{k}\right)\\
 & =\partial_{0}\left(\int_{0}^{1}\frac{1}{A(y)}dy\right)^{-1}\sum_{k=0}^{\infty}\left(-\sum_{\ell=1}^{\infty}\left(-\partial_{0}^{-1}\int_{0}^{1}\frac{B(y)}{A(y)}dy\right)^{\ell}\right)^{k}\\
 & =\partial_{0}\left(\int_{0}^{1}\frac{1}{A(y)}dy\right)^{-1}\left(1+\sum_{\ell=1}^{\infty}\left(-\partial_{0}^{-1}\int_{0}^{1}\frac{B(y)}{A(y)}dy\right)^{\ell}\right)^{-1}\\
 & =\partial_{0}\left(\int_{0}^{1}\frac{1}{A(y)}dy\right)^{-1}\left(\sum_{\ell=0}^{\infty}\left(-\partial_{0}^{-1}\int_{0}^{1}\frac{B(y)}{A(y)}dy\right)^{\ell}\right)^{-1}\\
 & =\partial_{0}\left(\int_{0}^{1}\frac{1}{A(y)}dy\right)^{-1}\left(1+\partial_{0}^{-1}\int_{0}^{1}\frac{B(y)}{A(y)}dy\right)\\
 & =\partial_{0}\left(\int_{0}^{1}\frac{1}{A(y)}dy\right)^{-1}+\left(\int_{0}^{1}\frac{1}{A(y)}dy\right)^{-1}\int_{0}^{1}\frac{B(y)}{A(y)}dy.
\end{align*}
\end{example}
\begin{rem}
In \cite{Petrini1998}, the authors consider an equation of the form
$\left(\partial_{0}+a_{n}(m_{0})\right)u_{n}=f$ in the space $L^{2}(\mathbb{R};L^{2}(\mathbb{R}))$
with $\left(a_{n}\right)_{n}$ being a bounded sequence in $L^{\infty}(\mathbb{R}\times\mathbb{R})$.
Assuming weak-$*$-convergence of $\left(a_{n}\right)_{n}$ , the
author shows weak convergence of $(u_{n})_{n}$. The limit equation
is a convolution equation involving the Young-measure associated to
the sequence $\left(a_{n}\right)_{n}$. Within our reasoning, we cannot
show that the whole sequence converges, unless any power of $\left(a_{n}\right)_{n}$
converges in the weak-$*$ topology of $L^{\infty}$. However, as
we illustrated above (see e.g.\ Example \ref{Drude-Born-Fedorov-Ex})
our approach has a wide range of applications, where the method involving
Young-measures might fail to work.
\end{rem}

\section{Proof of the main theorems\label{sec:Proof-of-the}}

We will finally prove our main theorems. The proof relies on elementary
Hilbert space concepts. We emphasize that the generality of the perspective
hardly allows the introduction of Young-measures, which have proven
to be useful in particular cases (see the sections above for a detailed
discussion). Before we give a detailed account of the proofs of our
main theorems, we state the following auxilaury result, which we state
without proof.
\begin{lem}
\label{lem:pos-def} Let $H$ be a Hilbert space, $T\in L(H)$. Assume
that $\Re T\geqq c$ for some $c>0$. Then $\|T^{-1}\|\leqq\frac{1}{c}$
and $\Re T^{-1}\geqq\frac{c}{\|T\|^{2}}.$\end{lem}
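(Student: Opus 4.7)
The plan is to first unpack the assumption $\Re T \geq c$: it means $\Re\langle Tx, x\rangle \geq c\|x\|^2$ for all $x \in H$. From this I would derive the lower bound $\|Tx\| \geq c\|x\|$ via Cauchy--Schwarz, since $c\|x\|^2 \leq \Re\langle Tx, x\rangle \leq |\langle Tx, x\rangle| \leq \|Tx\|\|x\|$. This immediately yields injectivity of $T$ with closed range, and since $\Re T^* = \Re T \geq c$ the same bound applies to $T^*$, so $T^*$ is injective and hence $T$ has dense range. Thus $T$ is a bijection onto $H$, and the bound $\|Tx\| \geq c\|x\|$ rewritten as $\|y\| \geq c\|T^{-1}y\|$ gives $\|T^{-1}\| \leq 1/c$.

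For the second inequality, the plan is to substitute $x = T^{-1}y$ into the coercivity bound. This yields
\[
\Re\langle T^{-1}y, y\rangle = \Re\langle x, Tx\rangle = \Re\langle Tx, x\rangle \geq c\|x\|^2 = c\|T^{-1}y\|^2.
\]
Then I would use $\|y\| = \|Tx\| \leq \|T\|\|x\| = \|T\|\|T^{-1}y\|$, so $\|T^{-1}y\| \geq \|y\|/\|T\|$, and combining these two bounds gives $\Re\langle T^{-1}y, y\rangle \geq (c/\|T\|^2)\|y\|^2$, which is the claimed $\Re T^{-1} \geq c/\|T\|^2$.

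There is essentially no obstacle here; the entire argument is elementary Hilbert space manipulation. The one mildly subtle point is justifying surjectivity of $T$ (and hence the well-definedness of $T^{-1}$) before one uses it, which is why invoking the same coercivity for $T^*$ to obtain dense range is the cleanest route. Everything else is a direct substitution.
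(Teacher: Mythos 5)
Your proof is correct, and there is nothing in the paper to compare it against: the author explicitly states Lemma \ref{lem:pos-def} ``without proof.'' Your argument is the standard Lax--Milgram-type reasoning (coercivity gives $\|Tx\|\geqq c\|x\|$, the same bound for $T^{*}$ gives dense range, hence invertibility and $\|T^{-1}\|\leqq 1/c$; substituting $x=T^{-1}y$ gives the coercivity of $T^{-1}$), and every step, including the attention paid to surjectivity before writing $T^{-1}$, is sound.
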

\begin{proof}[Proof of Theorem \ref{thm:first-hom_thm}]
 For $f\in C_{\infty,c}(\mathbb{R};H)$ let $u_{n}$ solve
\[
\left(\partial_{0}\mathcal{M}_{n}+\mathcal{N}_{n}\right)u_{n}=f.
\]
This yields
\begin{align*}
u_{n} & =\mathcal{M}_{n}^{-1}\left(1+\partial_{0}^{-1}\mathcal{N}_{n}\mathcal{M}_{n}^{-1}\right)^{-1}\partial_{0}^{-1}f\\
 & =\mathcal{M}_{n}^{-1}\sum_{\ell=0}^{\infty}\left(-\partial_{0}^{-1}\mathcal{N}_{n}\mathcal{M}_{n}^{-1}\right)^{\ell}\partial_{0}^{-1}f\\
 & =\left(\mathcal{M}_{n}^{-1}+\sum_{\ell=1}^{\infty}\mathcal{M}_{n}^{-1}\left(-\partial_{0}^{-1}\mathcal{N}_{n}\mathcal{M}_{n}^{-1}\right)^{\ell}\right)\partial_{0}^{-1}f.
\end{align*}
Hence, choosing an appropriate subsequence, we arrive at an expression
of the form
\[
u=\left(\mathcal{M}_{hom,0}+\sum_{\ell=1}^{\infty}\mathcal{M}_{hom,\ell}\right)\partial_{0}^{-1}f.
\]
We remark here that due to the (standard) estimate $\|T\|\leqq\liminf_{k\to\infty}\|T_{k}\|$
for a sequence $\left(T_{k}\right)_{k}$ of bounded linear operators
in some Hilbert space converging to $T$, the series $\sum_{\ell=1}^{\infty}\mathcal{M}_{hom,\ell}$
converges with respect to the operator norm if $\nu$ is chosen large
enough. Using the positive definiteness of $\mathcal{M}_{n}$ for
all $n\in\mathbb{N}$ and Lemma \ref{lem:pos-def}, we deduce that
\[
\Re\mathcal{M}_{n}^{-1}\geqq\frac{c}{\sup_{n\in\mathbb{N}}\|\mathcal{M}_{n}\|^{2}}.
\]
By $\|\mathcal{M}_{n}^{-1}\|\leqq\frac{1}{c},$ we conclude that 
\[
\Re\mathcal{M}_{hom,0}\geqq\frac{c}{\sup_{n\in\mathbb{N}}\|\mathcal{M}_{n}\|^{2}}
\]
 and
\[
\Re\mathcal{M}_{hom,0}^{-1}\geqq\frac{c^{3}}{\sup_{n\in\mathbb{N}}\|\mathcal{M}_{n}\|^{2}}.
\]
We arrive at
\begin{align*}
f & =\partial_{0}\left(1+\sum_{\ell=1}^{\infty}\mathcal{M}_{hom,0}^{-1}\mathcal{M}_{hom,\ell}\right)^{-1}\mathcal{M}_{hom,0}^{-1}u\\
 & =\partial_{0}\sum_{k=0}^{\infty}\left(-\sum_{\ell=1}^{\infty}\mathcal{M}_{hom,0}^{-1}\mathcal{M}_{hom,\ell}\right)^{k}\mathcal{M}_{hom,0}^{-1}u\\
 & =\partial_{0}\left(1+\sum_{k=1}^{\infty}\left(-\sum_{\ell=1}^{\infty}\mathcal{M}_{hom,0}^{-1}\mathcal{M}_{hom,\ell}\right)^{k}\right)\mathcal{M}_{hom,0}^{-1}u\\
 & =\partial_{0}\mathcal{M}_{hom,0}^{-1}u+\partial_{0}\sum_{k=1}^{\infty}\left(-\sum_{\ell=1}^{\infty}\mathcal{M}_{hom,0}^{-1}\mathcal{M}_{hom,\ell}\right)^{k}\mathcal{M}_{hom,0}^{-1}u.\tag*{{\qedhere}}
\end{align*}

\end{proof}
~
\begin{proof}[Proof of Theorem \ref{thm:second-hom_thm}]
 We observe 
\begin{align*}
 & \left(\begin{array}{cc}
\partial_{0}\mathcal{M}_{n}+\mathcal{N}_{n}^{00} & \mathcal{N}_{n}^{01}\\
\mathcal{N}_{n}^{10} & \mathcal{N}_{n}^{11}
\end{array}\right)\\
 & =\left(\begin{array}{cc}
1 & \quad\mathcal{N}_{n}^{01}\left(\mathcal{N}_{n}^{11}\right)^{-1}\\
0 & 1
\end{array}\right)\left(\begin{array}{cc}
\partial_{0}\mathcal{M}_{n}+\mathcal{N}_{n}^{00}-\mathcal{N}_{n}^{01}\left(\mathcal{N}_{n}^{11}\right)^{-1}\mathcal{N}_{n}^{10} & 0\\
0 & \mathcal{N}_{n}^{11}
\end{array}\right)\left(\begin{array}{cc}
1 & 0\\
\left(\mathcal{N}_{n}^{11}\right)^{-1}\mathcal{N}_{n}^{10}\quad & 1
\end{array}\right).
\end{align*}
Thus, with $B:=\left(\partial_{0}\mathcal{M}_{n}+\mathcal{N}_{n}^{00}-\mathcal{N}_{n}^{01}\left(\mathcal{N}_{n}^{11}\right)^{-1}\mathcal{N}_{n}^{10}\right)^{-1}$
{\footnotesize
\begin{align*}
 & \left(\begin{array}{cc}
\partial_{0}\mathcal{M}_{n}+\mathcal{N}_{n}^{00} & \mathcal{N}_{n}^{01}\\
\mathcal{N}_{n}^{10} & \mathcal{N}_{n}^{11}
\end{array}\right)^{-1}\\
 & =\left(\begin{array}{cc}
1 & 0\\
-\left(\mathcal{N}_{n}^{11}\right)^{-1}\mathcal{N}_{n}^{10} & 1
\end{array}\right)\left(\begin{array}{cc}
\left(\partial_{0}\mathcal{M}_{n}+\mathcal{N}_{n}^{00}-\mathcal{N}_{n}^{01}\left(\mathcal{N}_{n}^{11}\right)^{-1}\mathcal{N}_{n}^{10}\right)^{-1} & 0\\
0 & \left(\mathcal{N}_{n}^{11}\right)^{-1}
\end{array}\right)\left(\begin{array}{cc}
1 & -\mathcal{N}_{n}^{01}\left(\mathcal{N}_{n}^{11}\right)^{-1}\\
0 & 1
\end{array}\right)\\
 & =\left(\begin{array}{cc}
B & 0\\
-\left(\mathcal{N}_{n}^{11}\right)^{-1}\mathcal{N}_{n}^{10}B\quad & \quad\left(\mathcal{N}_{n}^{11}\right)^{-1}
\end{array}\right)\left(\begin{array}{cc}
1 & -\mathcal{N}_{n}^{01}\left(\mathcal{N}_{n}^{11}\right)^{-1}\\
0 & 1
\end{array}\right)\\
 & =\left(\begin{array}{cc}
B & -B\mathcal{N}_{n}^{01}\left(\mathcal{N}_{n}^{11}\right)^{-1}\\
-\left(\mathcal{N}_{n}^{11}\right)^{-1}\mathcal{N}_{n}^{10}B\quad & \quad\left(\mathcal{N}_{n}^{11}\right)^{-1}\mathcal{N}_{n}^{10}B\mathcal{N}_{n}^{01}\left(\mathcal{N}_{n}^{11}\right)^{-1}+\left(\mathcal{N}_{n}^{11}\right)^{-1}
\end{array}\right).
\end{align*}

}

With the Neumann series expression derived in the previous theorem,
i.e., 
\[
B=\mathcal{M}_{n}^{-1}\partial_{0}^{-1}+\sum_{\ell=1}^{\infty}\mathcal{M}_{n}^{-1}\left(-\partial_{0}^{-1}\mathcal{N}_{n}\mathcal{M}_{n}^{-1}\right)^{\ell}\partial_{0}^{-1}
\]
with $\mathcal{N}_{n}=\mathcal{N}_{n}^{00}-\mathcal{N}_{n}^{01}\left(\mathcal{N}_{n}^{11}\right)^{-1}\mathcal{N}_{n}^{10}$,
we get that{\footnotesize
\begin{align*}
 & \left(\begin{array}{cc}
\partial_{0}\mathcal{M}_{n}+\mathcal{N}_{n}^{00} & \mathcal{N}_{n}^{01}\\
\mathcal{N}_{n}^{10} & \mathcal{N}_{n}^{11}
\end{array}\right)^{-1}\\
 & =\sum_{\ell=0}^{\infty}\left(\begin{array}{cc}
\mathcal{M}_{n}^{-1}\left(-\partial_{0}^{-1}\mathcal{N}_{n}\mathcal{M}_{n}^{-1}\right)^{\ell}\partial_{0}^{-1} & -\mathcal{M}_{n}^{-1}\left(-\partial_{0}^{-1}\mathcal{N}_{n}\mathcal{M}_{n}^{-1}\right)^{\ell}\partial_{0}^{-1}\mathcal{N}_{n}^{01}\left(\mathcal{N}_{n}^{11}\right)^{-1}\\
-\left(\mathcal{N}_{n}^{11}\right)^{-1}\mathcal{N}_{n}^{10}\mathcal{M}_{n}^{-1}\left(-\partial_{0}^{-1}\mathcal{N}_{n}\mathcal{M}_{n}^{-1}\right)^{\ell}\partial_{0}^{-1}\quad & \quad\left(\mathcal{N}_{n}^{11}\right)^{-1}\mathcal{N}_{n}^{10}\mathcal{M}_{n}^{-1}\left(-\partial_{0}^{-1}\mathcal{N}_{n}\mathcal{M}_{n}^{-1}\right)^{\ell}\partial_{0}^{-1}\mathcal{N}_{n}^{01}\left(\mathcal{N}_{n}^{11}\right)^{-1}
\end{array}\right)\\
 & \quad+\left(\begin{array}{cc}
0 & 0\\
0 & \left(\mathcal{N}_{n}^{11}\right)^{-1}
\end{array}\right).
\end{align*}
}With Theorem \ref{thm:first-hom_thm}, we deduce convergence of
the top left corner in the latter matrix. Similarly, we deduce convergence
of the other expressions. Thus, for a suitable choice of subsequences,
we arrive at
\[
\sum_{\ell=1}^{\infty}\left(\begin{array}{cc}
\mathcal{M}_{hom,\ell,00}\partial_{0}^{-1} & \mathcal{M}_{hom,\ell,01}\\
\mathcal{M}_{hom,\ell,10}\partial_{0}^{-1} & \mathcal{M}_{hom,\ell,11}
\end{array}\right)+\left(\begin{array}{cc}
\mathcal{M}_{hom,0,00}\partial_{0}^{-1} & \mathcal{M}_{hom,0,01}\\
\mathcal{M}_{hom,0,10}\partial_{0}^{-1} & \mathcal{M}_{hom,0,11}+\mathcal{N}_{hom,-1,11}
\end{array}\right).
\]
We observe that 
\begin{align*}
 & \sum_{\ell=1}^{\infty}\left(\begin{array}{cc}
\mathcal{M}_{hom,\ell,00}\partial_{0}^{-1} & \mathcal{M}_{hom,\ell,01}\\
\mathcal{M}_{hom,\ell,10}\partial_{0}^{-1} & \mathcal{M}_{hom,\ell,11}
\end{array}\right)+\left(\begin{array}{cc}
\mathcal{M}_{hom,0,00}\partial_{0}^{-1} & \mathcal{M}_{hom,0,01}\\
\mathcal{M}_{hom,0,10}\partial_{0}^{-1} & \mathcal{M}_{hom,0,11}+\mathcal{N}_{hom,-1,11}
\end{array}\right)\\
 & =\left(\mathcal{M}^{(1)}+\left(\begin{array}{cc}
\mathcal{M}_{hom,0,00} & 0\\
0 & \mathcal{N}_{hom,-1,11}
\end{array}\right)\right)\left(\begin{array}{cc}
\partial_{0}^{-1} & 0\\
0 & 1
\end{array}\right).
\end{align*}
Moreover, note that the operator $\mathcal{M}^{(1)}$ has norm arbitrarily
small if $\nu$ was chosen large enough. Hence, the operator 
\begin{align*}
 & \left(\mathcal{M}^{(1)}+\left(\begin{array}{cc}
\mathcal{M}_{hom,0,00} & 0\\
0 & \mathcal{N}_{hom,-1,11}
\end{array}\right)\right)\\
 & =\left(\begin{array}{cc}
\mathcal{M}_{hom,0,00} & 0\\
0 & \mathcal{N}_{hom,-1,11}
\end{array}\right)\left(\left(\begin{array}{cc}
\mathcal{M}_{hom,0,00} & 0\\
0 & \mathcal{N}_{hom,-1,11}
\end{array}\right)^{-1}\mathcal{M}^{(1)}+1\right)
\end{align*}
is invertible. This gives

\begin{align*}
 & \left(\left(\mathcal{M}^{(1)}+\left(\begin{array}{cc}
\mathcal{M}_{hom,0,00} & 0\\
0 & \mathcal{N}_{hom,-1,11}
\end{array}\right)\right)\left(\begin{array}{cc}
\partial_{0}^{-1} & 0\\
0 & 1
\end{array}\right)\right)^{-1}\\
 & =\left(\begin{array}{cc}
\partial_{0} & 0\\
0 & 1
\end{array}\right)\sum_{\ell=0}^{\infty}\left(-\left(\begin{array}{cc}
\mathcal{M}_{hom,0,00} & 0\\
0 & \mathcal{N}_{hom,-1,11}
\end{array}\right)^{-1}\mathcal{M}^{(1)}\right)^{\ell}\left(\begin{array}{cc}
\mathcal{M}_{hom,0,00}^{-1} & 0\\
0 & \mathcal{N}_{hom,-1,11}^{-1}
\end{array}\right)\\
 & =\left(\begin{array}{cc}
\partial_{0} & 0\\
0 & 1
\end{array}\right)\left(\left(\begin{array}{cc}
\mathcal{M}_{hom,0,00}^{-1} & 0\\
0 & \mathcal{N}_{hom,-1,11}^{-1}
\end{array}\right)^{\phantom{\ell}}\right.\\
 & \quad+\left.\sum_{\ell=1}^{\infty}\left(-\left(\begin{array}{cc}
\mathcal{M}_{hom,0,00} & 0\\
0 & \mathcal{N}_{hom,-1,11}
\end{array}\right)^{-1}\mathcal{M}^{(1)}\right)^{\ell}\left(\begin{array}{cc}
\mathcal{M}_{hom,0,00}^{-1} & 0\\
0 & \mathcal{N}_{hom,-1,11}^{-1}
\end{array}\right)\right).\tag*{{\qedhere}}
\end{align*}
\end{proof}

\end{document}